\titleformat{\section}{\centering\normalfont\scshape}{\thesection.}{.5em}{#1}
\titleformat{\subsection}[runin]{\normalfont\itshape}{\textnormal{\thesubsection.}}{.5em}{#1.}
\titleformat{\subsubsection}[runin]{\normalfont\itshape}{\thesubsubsection.}{.5em}{#1.}
\titlespacing{\section}{0em}{1em}{0.5em}
\titlespacing{\subsection}{0em}{.5em}{0.5em}
\newcommand{\Zsep}{\mathcal Z^{\mathrm{sep}}_{k-\ell}}
\newcommand{\Zsepj}{\mathcal Z^{j}_{k-\ell}}
\newcommand{\ubar}[1]{\underline{#1}}
\newcommand {\barw}{\bar w}
\newcommand {\ow}{\bar w}
\newcommand{\ox}{{\bar x}}
\newcommand{\oy}{{\bar y}}
\newcommand{\oz}{{\bar z}}
\newcommand{\uw}{{\ubar w}}
\newcommand{\ux}{{\ubar x}}
\newcommand{\uy}{{\ubar y}}
\newcommand{\uz}{{\ubar z}}
\definecolor{gray}{gray}{0.5}
\newcommand{\cmt}[1]{}
\newcommand{\vertiii}[1]{{\left\vert\kern-0.25ex\left\vert\kern-0.25ex\left\vert #1
\right\vert\kern-0.25ex\right\vert\kern-0.25ex\right\vert}}
\newcommand{\detail}[1]{}
\renewcommand{\d}{(2*\n+\m)}
\newcommand{\Qfourx}[1]{\d*(\d-1)/(\d*\d+\d*\m+(2*#1-1)*(\m+1))}
\newcommand{\Qfoury}[1]{(\m+1)*(\d-1)/(\d*\d+\d*\m+(2*#1-1)*(\m+1))}
\newcommand\Qthreex[1]{(\d-#1)/( \d-#1+\m+1)}
\newcommand\Qthreey[1]{(\m+1)/(\d-#1+\m+1)}
\newcommand{\definecoords}{
\def\ptsize{.1pt}
\def\QbgFillOpacity{.2}
\def\QbgFillColor{black}
\def\QbgLineOpacity{.6}
\def\QbgDrawCritSeg{1}
\def\QbgCritSegStyle{solid}
\def\QbgCritSegOpacity{\QbgLineOpacity}
\def\QbbFillOpacity{.1}
\def\QbbLineOpacity{.5}
\def\AFillOpacity{.1}
\def\AFillColor{red}
\def\ALineOpacity{.8}
\coordinate (Q1) at (0,0);
\coordinate (Q2) at ( {(2*\n-1)/(2*\n-1+\b)}, {(2*\n-1)/(2*\n-1+\b)} );
\coordinate (Q3) at ( {\Qthreex{\b}}, { \Qthreey{\b} } );
\coordinate (Q30) at ( {\Qthreex{0}}, { \Qthreey{0} } );
\coordinate (Q4b) at ( { \Qfourx{\b} }, { \Qfoury{\b} } );
\coordinate (Q4g) at ( { \Qfourx{\g} }, { \Qfoury{\g} } );

\coordinate (C1) at ( { (\Qfourx{\b}+\Qfourx{\g})/2 }, {(\Qfoury{\b}+\Qfoury{\g})/2} );
\coordinate (C2) at (Q4b);
}
\newcommand{\drawauxlines}{
\draw (0,0) [->] -- (0,1) node [left] {$\frac1q$};
\draw (0,0) [->] -- (1,0) node [below] {$\frac1p$};
\draw [dashed,opacity=.3] (1,0) -- (0,1);
\draw [dashed,opacity=.3] (0,0) -- (1,{(1+\m)/\d});
\draw [dashed,opacity=.3] (0,0) -- (1,1);
\draw [dashed,opacity=.1]
(.5, 0) -- (.5, .5);
}
\newcommand{\drawQbg}{
\fill (Q1) node [left] {$Q_1$} circle [radius=.02em];
\fill (Q2) node [above] {$Q_{2,\beta}$} circle [radius=\ptsize];
\fill (Q3) node [right] {$Q_{3,\beta}$} circle [radius=\ptsize];
\fill (Q4g) node [below] {$Q_{4,\gamma}$} circle [radius=\ptsize];
\fill [color=\QbgFillColor,opacity=\QbgFillOpacity] (Q1) -- (Q2) -- (Q3) -- (Q4g) -- cycle;
\draw [opacity=\QbgLineOpacity] (Q1) -- (Q2) -- (Q3);
\draw [opacity=\QbgLineOpacity] (Q4g) -- (Q1);
\if\QbgDrawCritSeg1
\draw [style=\QbgCritSegStyle,opacity=\QbgCritSegOpacity] (Q3) -- (Q4g);
\fi
}
\newcommand{\ttf} {\tfrac{x'-y'}{t}}
\newcommand{\ttfw} {\tfrac{x'-w'}{t}}
\def\lc{\lesssim}
\def\gc{\gtrsim}
\def\eps{\varepsilon}
\newcommand{\floor}[1]{\lfloor #1 \rfloor }
\newcommand{\Be}{\begin{equation}}
\newcommand{\Ee}{\end{equation}}
\newcommand{\Bm}{\begin{multline}}
\newcommand{\Em}{\end{multline}}
\def\intslash{\rlap{\kern .32em $\mspace {.5mu}\backslash$ }\int}
\def\qsl{{\rlap{\kern .32em $\mspace {.5mu}\backslash$ }\int_{Q_x}}}
\def\lc{\lesssim}
\def\gc{\gtrsim}
\def\floor#1{{\lfloor #1 \rfloor }}
\def\emph#1{{\it #1 }}
\def\diam{{\text{\rm diam}}}
\def\ga{\gamma}
\def\cf{{\it cf}}
\def\rank{{\mathrm{rank}}}
\def\meas{{\mathrm{meas}}}
\newcommand{\rmqA}{{\mathrm {qA}}}
\newcommand{\rmA}{{\mathrm A}}
\newcommand{\rmM}{{\mathrm M}}
\def\inn#1#2{\langle#1,#2\rangle}
\def\ga{\gamma} 
\def\eps{\varepsilon}
\def\ep{\epsilon}
\def\la{\lambda} \def\La{\Lambda}
\def\om{\omega}
\def\fM{{\mathfrak {M}}}
\def\fs{{\mathfrak {s}}}
\def\fx{{\mathfrak {x}}}
\def\bbH{{\mathbb {H}}}
\def\bbN{{\mathbb {N}}}
\def\bbR{{\mathbb {R}}}
\def\bbZ{{\mathbb {Z}}}
\def\cA{{\mathcal {A}}}
\def\cI{{\mathcal {I}}}
\def\cJ{{\mathcal {J}}}
\def\cK{{\mathcal {K}}}
\def\cM{{\mathcal {M}}}
\def\cP{{\mathcal {P}}}
\def\cR{{\mathcal {R}}}
\def\cT{{\mathcal {T}}}
\def\cZ{{\mathcal {Z}}}
\def\emph#1{{\it #1}}
\def\textbf#1{{\bf #1}}
\def\beq{\begin{equation}}
\def\endeq{\end{equation}}
\def\bs{\begin{split}}
\def\es{\end{split}}
\theoremstyle{plain}
\newtheorem{thm}{Theorem}[section]
\newtheorem{prop}[thm]{Proposition}
\newtheorem{lem}[thm]{Lemma}
\newtheorem*{thm*}{Theorem}
\newtheorem*{conj*}{Conjecture}
\newtheorem*{openproblem*}{Open Problem}
\theoremstyle{remark}
\newtheorem{rem}[thm]{Remark}
\newtheorem*{remarka}{Remark}
\numberwithin{equation}{section}
\definecolor{recol}{rgb}{0,0,1.}
\definecolor{rscol}{rgb}{0.1,.6,0.1}
\definecolor{ascol}{rgb}{1.,0,0}
\def\R{\mathbb{R}}
\def\C{\mathbb{C}}
\begin{document}
\title
[Spherical maximal functions on Heisenberg groups]{
Spherical maximal operators on Heisenberg groups: Restricted dilation sets}
\author[ J. Roos \ \ A. Seeger \ \ R. Srivastava]{Joris Roos \ \ \ \ Andreas Seeger \ \ \ \ Rajula Srivastava}

\address{Joris Roos: Department of Mathematical Sciences, University of Massachusetts Lowell \& School of Mathematics, The University of Edinburgh}
\email{joris\_roos@uml.edu}

\address{Andreas Seeger: Department of Mathematics, University of Wisconsin, 480 Lincoln Drive, Madison, WI, 53706, USA.}
\email{seeger@math.wisc.edu}

\address{Rajula Srivastava: Department of Mathematics, University of Wisconsin, 480 Lincoln Drive, Madison, WI, 53706, USA.}
\email{rajulas@math.uni-bonn.de}
\curraddr{Mathematical Institute, University of Bonn, Endenicher Allee 60, 53115 Bonn, Germany}

\keywords{Spherical means; spherical maximal operators, restricted dilation sets, quasi-Assouad dimension, Minkowski-dimension}
\subjclass[2020]{42B25 (43A80, 42B99, 22E25)}

\maketitle

\begin{abstract}
Consider spherical means on the Heisenberg group with a codimension two incidence relation,
and associated spherical local maximal functions $M_E f$ where the dilations are restricted to a set $E$. We prove $L^p\to L^q$ estimates for these maximal operators; the results depend on
various notions of dimension of $E$.
\end{abstract}

\section{Introduction}

The purpose of this paper is to extend recent $L^p$-improving results for local spherical maximal functions on the Heisenberg group in \cite{RoosSeegerSrivastava} to the setting of restricted dilation sets. To fix notation, for $n\in \bbN $, we let $\bbH^n$ denote the Heisenberg group of Euclidean dimension $d=2n+1$.
We denote coordinates on $\bbH^n$
by $x=(\ux,\ox)\in \bbR^{2n}\times \bbR$. The group law is given by
\[x \cdot y =(\ubar x+\ubar y, \ox+\oy + \ux^\intercal J\uy), \]
where $J$ is an invertible skew symmetric $2n\times 2n$ matrix.
The Heisenberg group is equipped with automorphic dilations given by
$\delta_t(x)=(t\ubar x, t^2 \ox)$.

Let $\mu$ be the normalized rotation-invariant measure on the $2n-1$ dimensional unit sphere in the horizontal subspace $\bbR^{2n} \times \{0\}$, centered at the origin.
The automorphic dilations map this subspace into itself. We define the dilates of $\mu$ by $\inn{\mu_t}{f}= \inn{\mu}{f\circ\delta_t}$, where $t>0$.
In this paper we study the averaging operators
\[f*\mu_t(x)=\int_{S^{2n-1}} f(\ubar x-t\om, \ox-t
\ux^\intercal J\om ) d\mu (\om),
\]
which were introduced
by Nevo and Thangavelu \cite{NevoThangavelu1997}.

Let $E\subset [1,2]$. We are interested in determining the set of exponent pairs $(\frac1p,\frac1q)\in [0,1]^2$
so that the local maximal operator
\[M_E f=\sup_{t\in E} |f*\mu_t|\]
extends to a bounded operator $L^p(\bbH^n)\to L^q(\bbH^n)$.
For the full maximal function $\sup_{t>0}|f*\mu_t|$
sharp $L^p(\bbH^n)\to L^p(\bbH^n)$ bounds for $n\ge 2$
were established
by M\"uller and the second author \cite{MuellerSeeger2004} and independently by Narayanan and Thangavelu \cite{NarayananThangavelu2004}.
The problem of $L^p\to L^q$ boundedness of the local version $M_{[1,2]}$ was investigated by Bagchi, Hait, Roncal and Thangavelu \cite{BagchiHaitRoncalThangavelu}, who were motivated
by applications to sparse bounds and weighted estimates for the corresponding {\em global} maximal function, as well as for a lacunary variant.
$L^p\to L^q$ results that are sharp up to endpoints, for both the single averages and full local maximal function,
were proved in our previous paper \cite{RoosSeegerSrivastava}.

In the present paper we ask what happens if we take for $E$ more general subsets of $[1,2]$.
This question was recently considered in the Euclidean setting in \cite{AndersonHughesRoosSeeger}, \cite{RoosSeeger} (also see the earlier paper \cite{SeegerWaingerWright1995} for the case $p=q$). While the $L^p\to L^p$ results depend on the Minkowski dimension of $E$ the new feature of \cite{AndersonHughesRoosSeeger}, \cite{RoosSeeger}
is the dependence on various different notions of fractal dimension.
These dimensions play a congruent role in the Heisenberg case. For $E\subset \bbR$ let $N(E,\delta)$ be the minimal number of intervals of length $\delta$ needed to cover $E$.
To state our main result we first recall the Minkowski and quasi-Assouad dimensions.
We say that $E$ has {\em Minkowski dimension} $\mathrm{dim}_M\,E=\beta\in [0,1]$ if for every $\varepsilon>0$ there exists
$c_\eps>0$ such that for every $\delta>0$,
\Be \label{eq:Minkentropy} N(E,\delta)\le c_\eps \delta^{-\beta-\varepsilon}.\Ee
The {\em Assouad spectrum} is a continuum of fractal dimensions defined in \cite{fraser-hare-hare-troscheit-yu} (see also \cite{fraser-yu1,fraser-yu2, fraser-book}): for $\theta\in [0,1]$ let $\overline{\dim}_{\mathrm{A},\theta}\, E$ denote the smallest number $\gamma$ such that for every $\eps>0$ there exists $c_\eps>0$ such that for every interval $I$ with $|I|\ge \delta^\theta$ we have
\Be \label{eq:Assouadentropy} N(E\cap I,\delta)\le c_\eps\, (|I|/\delta)^{\gamma+\varepsilon}.\Ee
As $\theta\mapsto \overline{\dim}_{\mathrm{A},\theta}\, E$ is non-decreasing the limit
$\dim_{\mathrm{qA}} \!E:= \lim_{\theta \nearrow 1} \overline{\dim}_{\mathrm{A},\theta}\, E $ exists and is called the \emph{quasi-Assouad dimension},
see \cite{lu-xi}.

To identify classes of sets for which our $L^p$-improving results are sharp we shall need the concept of quasi-Assouad regularity in \cite{RoosSeeger} (see also \cite{AndersonHughesRoosSeeger} for a related notion). A set $E\subset[1,2]$ with $\beta=\mathrm{dim}_{\mathrm{M}}\,E$ and
$\gamma~=~\mathrm{dim}_{\mathrm{qA}}\,E$ is called {\em quasi-Assouad regular} if either $\gamma=0$ or
$\overline{\dim}_{\mathrm{A},\theta}\,E=\dim_{\mathrm{qA}}\,E$ for all $\theta\in (1-\beta/\gamma,1)$. Observe that always $0\le \beta\le \gamma\le 1$.

Let $\mathcal{R}(\beta,\gamma)$ denote the closed quadrilateral with corners
\Be\label{quadrilateral}\begin{gathered}
Q_1=(0,0), \qquad Q_{2,\beta}=(\tfrac{2n-1}{2n-1+\beta},\tfrac{2n-1}{2n-1+\beta}),
\\
Q_{3,\beta} = (\tfrac{2n+1-\beta}{2n+3-\beta}, \tfrac{2}{2n+3-\beta}), \quad
Q_{4,\gamma} = (\tfrac{n(2n+1)}{2n^2+3n+2\gamma},
\tfrac{ 2n}{2n^2+3n+2\gamma}).
\end{gathered}
\Ee
\begin{figure}[ht]
\begin{tikzpicture}[scale=5]
\def\m{1}
\def\n{2}
\def\b{.8}
\def\g{.9}
\definecoords
\drawauxlines
\drawQbg
\end{tikzpicture}
\caption{The quadrilateral $\mathcal{R}(\beta,\gamma)$.}
\end{figure}
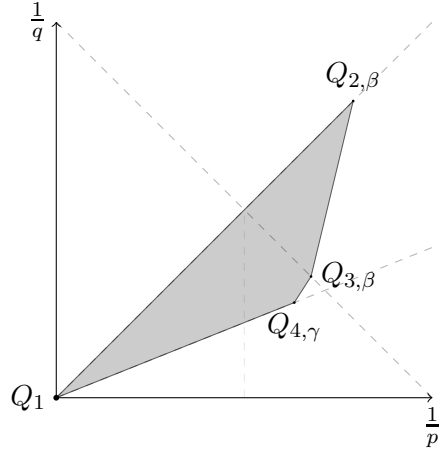

\begin{thm} \label{thm:main}
Let $n\ge 2$, $E\subset [1,2]$ with $\mathrm{dim}_\mathrm{M}\,E=\beta$ and $\mathrm{dim}_\mathrm{qA}\,E=\gamma$.
Then the following hold.

(i) $M_E:L^p(\bbH^n)\to L^q(\bbH^n)$ is bounded for $(\frac 1p, \frac 1q)$ in the interior of $\mathcal{R}(\beta,\gamma)$, and on the line segment $[Q_1,Q_{2,\beta})$.

(ii) If $E$ is quasi-Assouad regular and
$(\frac1p,\frac1q)\not\in \mathcal{R}(\beta,\gamma)$, then $M_E$
does not map $L^p(\bbH^n) $ to $L^q(\bbH^n)$.
\end{thm}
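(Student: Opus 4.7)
The plan is to adapt the strategy of \cite{RoosSeeger} for the Euclidean spherical maximal operator, now using as single-scale input the sharp Heisenberg single-average and local maximal $L^p\to L^q$ estimates of \cite{RoosSeegerSrivastava}. At its core, we reduce the boundedness of $M_E$ to a family of $\delta$-local maximal bounds at dyadic scales $\delta=2^{-k}$, and combine these through covering arguments controlled by the Minkowski count $N(E,2^{-k})$ and by the finer Assouad-spectrum counts $N(E\cap I,2^{-k})$ for subintervals $I\subset[1,2]$. By convexity, for part (i) it suffices to establish bounds arbitrarily close to each of the vertices $Q_{2,\beta}$ (approached along $[Q_1,Q_{2,\beta})$), $Q_{3,\beta}$ and $Q_{4,\gamma}$, and then to invoke interpolation with the trivial bound at $Q_1$.

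The diagonal vertex $Q_{2,\beta}$ is handled via a Littlewood--Paley decomposition of $\mu_t$ in the frequency variable combined with the single-average $L^p\to L^p$ bound, summed in the dilation variable using $N(E,2^{-k})\lesssim 2^{k(\beta+\eps)}$. The vertex $Q_{3,\beta}$ follows from the sharp single-average $L^p\to L^q$ bound of \cite{RoosSeegerSrivastava}, lifted to a maximal bound by an $\ell^q$-sum over a maximal $2^{-k}$-separated subset of $E$ and balancing $k$ against $N(E,2^{-k})\lesssim 2^{k(\beta+\eps)}$. The vertex $Q_{4,\gamma}$ requires the Assouad-spectrum refinement: fix $\theta\in(1-\beta/\gamma,1)$ so that $\overline{\dim}_{\mathrm{A},\theta}\,E\le \gamma+\eps$, partition a neighborhood of $E$ into intervals $I$ of length $\sim 2^{-k\theta}$, rescale each $I$ to unit length via the automorphic dilation, apply a single-scale local-maximal estimate on the rescaled piece, and sum using the two-parameter covering bounds $N(E\cap I,2^{-k})\lesssim (|I|/2^{-k})^{\gamma+\eps}$ and the outer count $N(E,2^{-k\theta})\lesssim 2^{k\theta(\beta+\eps)}$. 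The non-isotropic Heisenberg dilation $\delta_t(x)=(t\ux,t^2\ox)$ governs Jacobians and effective supports; adapting the rescaling to the codimension-two incidence relation is where the main technical work lies.

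For part (ii), one constructs counterexamples witnessing the necessity of each edge of $\mathcal{R}(\beta,\gamma)$. The edges meeting at $Q_{2,\beta}$ are ruled out by concentrating $f$ on a small Heisenberg ball and exploiting $\sim 2^{k\beta}$ translated spheres drawn from $E$; the edges through $Q_{3,\beta}$ and $Q_{4,\gamma}$ come from Knapp-type examples with $f$ supported on a thin non-isotropic neighborhood of a fixed sphere, with thickness calibrated either to the scale $2^{-k}$ and Minkowski count (at $Q_{3,\beta}$) or to the finer Assouad-spectrum scales $2^{-k\theta}$ with $\theta\to 1^-$ (at $Q_{4,\gamma}$). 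Quasi-Assouad regularity is used precisely to guarantee the existence of $\sim (|I|/2^{-k})^\gamma$ clustered points of $E$ inside intervals $I$ of length $\sim 2^{-k\theta}$, which saturate the bound at $Q_{4,\gamma}$. The main obstacle will be the delicate interplay between the Heisenberg automorphic dilation $\delta_t$, the codimension-two incidence geometry, and the two-scale Assouad decomposition: the single-scale estimates from \cite{RoosSeegerSrivastava} must be reformulated uniformly with respect to the rescaling that ports a subinterval of length $2^{-k\theta}$ back to $[1,2]$, and the Knapp counterexamples must be propagated through this rescaling without loss of sharpness.
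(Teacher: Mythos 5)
Your outline for $Q_{2,\beta}$ and $Q_{3,\beta}$ is broadly in the spirit of the paper (modulo the fact that the paper must first decompose the fixed-time averages both in frequency $2^k$ and in the size $2^{-\ell}$ of the ``rotational curvature'' parameter $\sigma$, and then uses Bourgain's restricted weak type interpolation to sum in $\ell$). The genuine gap is at $Q_{4,\gamma}$, which is the heart of the matter. First, the proposed mechanism does not exist: the automorphic dilations act multiplicatively on the radius, so they can map $[a,a+|I|]$ to $[1,1+|I|/a]$ but cannot rescale a subinterval $I\subset[1,2]$ of length $2^{-k\theta}$ to unit length; there is no affine reparametrization of $t$ compatible with the group structure. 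Second, and more fundamentally, even granting some two-scale reduction, the ``single-scale local-maximal estimate'' of \cite{RoosSeegerSrivastava} that you want to apply on each rescaled piece is a space-time estimate exploiting the rotational curvature of $(x,t)\mapsto f*\mu_t(x)$ over a \emph{full interval} of times; as the paper stresses, this input is destroyed once $t$ is restricted to a sparse set. The fixed-time operators $f\mapsto f*\mu_t$ themselves are Fourier integral operators with two-sided fold singularities along $\{\sigma=0\}$, not nondegenerate Radon transforms, and this is precisely why the restricted-dilation problem is harder here than in the Euclidean case. The paper's actual argument at $Q_{4,\gamma}$ is a Stein--Tomas type $L^2\to L^{q_5,\infty}$ bound on $\bbR^{2n+1}\times\cZ_{k-\ell}$ proved by a $TT^*$ argument: the sum over $t'$ with $|t-t'|\approx 2^{j+\ell-k}$ is controlled by the Assouad count $\lesssim 2^{j\gamma}$, and the crux is the kernel estimate $|\cK^{k,\ell}_{t,\breve t}(x,\breve x)|\lesssim 2^{\ell\nu}(1+2^k|t-\breve t|)^{-n}$ of Proposition \ref{prop:kernelest}, whose proof requires a careful stationary phase analysis near the fold surface (splitting the $w'$ variables, where the amplitude is tame, from the $(w_{2n},\bar w)$ variables, where its derivatives blow up like $2^{\ell|\alpha|}$). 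None of this machinery is anticipated in your proposal, and without it the estimate at $Q_{4,\gamma}$ does not follow from the ingredients you list.

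A secondary but real issue concerns part (ii): your description of the $\overline{Q_{3,\beta}Q_{4,\gamma}}$ edge as a ``Knapp-type example with $f$ supported on a thin non-isotropic neighborhood of a fixed sphere'' understates what is needed. The paper points out that the correct example is substantially different from a Euclidean codimension-two Knapp cap: it uses a set with three distinct scales ($\varsigma=\delta^{(1-\theta)/2}$ in the $z_l',z_r'$ directions, $\delta$ in the $z_n,z_{2n}$ directions, and $\delta^{1-\theta}$ in the central direction), adapted to the symplectic form $J$, and the receiving sets $R^t_\delta$ have the dual anisotropy. The standard Knapp construction you describe would not saturate the exponent on this edge. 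The remaining edges ($\overline{Q_1Q_{2,\beta}}$ from $p\le q$, $\overline{Q_{2,\beta}Q_{3,\beta}}$ from a ball example summed over $N(E,\delta)$ translates, $\overline{Q_1Q_{4,\gamma}}$ from the single-average annulus example) are as you describe.
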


Note that up to endpoints we recover the corresponding sharp results for $E=[1,2]$ in \cite{RoosSeegerSrivastava}.
Further examples of quasi-Assouad regular sets include convex sequences, self-similar sets with $\beta=\gamma$ (such as Cantor sets)
and many more; see \cite[\S 6]{RoosSeeger}. Note that we do not cover the case $n=1$; indeed it is currently unknown whether the full circular maximal operator on the Heisenberg group $\bbH^1$ is bounded on any $L^p$ for $p<\infty$ and $L^p$-improving estimates are even more elusive (see \cite{BeltranGuoHickmanSeeger, LeeLee} for results on Heisenberg-radial functions).

The definitions of Minkowski and Assouad dimension in \eqref{eq:Minkentropy} and \eqref{eq:Assouadentropy} allow positive or negative powers of $\log\delta^{-1} $, or $\log (\delta/|I|)^{-1}$, and are therefore not suitable for the formulation of endpoint results at the boundary of $\cR(\beta,\gamma)$.
The following theorem covers such endpoint results for $0<\beta<1$. We define functions $\chi^E_{\rmM,\beta}, \,\chi^E_{\rmA,\gamma}:[0,1]\to [0,\infty)$, by
\begin{subequations}
\begin{align}
\label{eq:Minkchar}
\chi^E_{\rmM,\beta}(\delta)&= \delta^\beta N(E,\delta),
\\
\label{eq:Assouadchar}
\chi^E_{\rmA,\gamma}(\delta) &= \sup_{|I|>\delta} (\delta/|I|)^\gamma N(E\cap I, \delta)\,.
\end{align}
\end{subequations}
As in \cite{RoosSeeger} we refer to $\chi_{\rmM,\beta}^E$
as
the $\beta$ -Minkowski characteristic of $E$ and to
$\chi^E_{\rmA,\gamma}$
as the $\gamma$-Assouad characteristic of $E$.
If these characteristics are bounded then we obtain $L^p\to L^q$ boundedness of $M_E$ on the edges of $\cR(\beta,\gamma)$, with the possible exception of corners $Q_{2,\beta}$, $Q_{3,\beta}$ and $Q_{4,\gamma} $.
\begin{thm}
\label{thm:mainsharp}
Let $n\ge 2$, $E\subset [1,2]$, $0\le \beta\le 1$ and $\beta\le\gamma\le 1$ and assume that $\sup_{0<\delta<1} \chi^E_{\rmM,\beta}(\delta) <\infty$, $
\sup_{0<\delta<1} \chi^E_{\rmA,\gamma}(\delta) <\infty.$
Then the following hold.

(i)
$M_E :L^p(\bbH^n)\to L^q(\bbH^n)$ for $(\frac 1p, \frac 1q)\in \mathcal{R}(\beta,\gamma) \setminus \{ Q_{2,\beta}, Q_{3,\beta}, Q_{4,\gamma} \}$.

(ii) $M_E$ is of restricted weak type $(p,q)$ for all $(\frac 1p, \frac 1q) \in \cR(\beta,\gamma) $.
\end{thm}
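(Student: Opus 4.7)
The plan is to combine the frequency-localized single-scale estimates underlying Theorem~\ref{thm:main} in \cite{RoosSeegerSrivastava} with the combinatorial framework of \cite{RoosSeeger}, adapted here to the Heisenberg dilation structure. The proof of Theorem~\ref{thm:main} proceeds via a Littlewood--Paley decomposition $\mu_t=\sum_{k\ge 0}\mu_t^k$ adapted to the automorphic dilations of $\bbH^n$, yielding for each $k$ single-scale bounds $\|f*\mu_t^k\|_{L^q(\bbH^n)}\lesssim 2^{-k\alpha(p,q)}\|f\|_{L^p(\bbH^n)}$ together with the Bernstein-type estimate $\|\partial_t(f*\mu_t^k)\|_{L^q}\lesssim 2^{k}\|f*\mu_t^k\|_{L^q}$. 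I would take these (together with their endpoint versions at the scaling-critical points associated with $Q_{3,\beta}$ and $Q_{4,\gamma}$) as black-box inputs.

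For each $k\ge 0$, cover $E$ by a minimal collection of intervals $\{I_{k,j}\}_{j\in\cJ_k}$ of length $2^{-k}$. The Minkowski hypothesis $\chi^E_{\rmM,\beta}\lesssim 1$ gives $\#\cJ_k\lesssim 2^{k\beta}$; the Assouad hypothesis $\chi^E_{\rmA,\gamma}\lesssim 1$ gives the refined count $\#\{j:I_{k,j}\subset I\}\lesssim (|I|\,2^{k})^{\gamma}$ for every sub-interval $I$ with $|I|\ge 2^{-k}$. On each $I_{k,j}$, Sobolev embedding in the $t$-variable yields
\[
\Big\|\sup_{t\in E\cap I_{k,j}}|f*\mu_t^k|\Big\|_{L^q(\bbH^n)}^q \lesssim 2^{k}\int_{I_{k,j}}\|f*\mu_t^k\|_{L^q}^q\,dt + 2^{-k(q-1)}\int_{I_{k,j}}\|\partial_t(f*\mu_t^k)\|_{L^q}^q\,dt.
\]
Summation over $j\in\cJ_k$, using either the Minkowski or the Assouad count as appropriate, followed by summation in $k$, delivers the $L^p\to L^q$ boundedness on the edges of $\cR(\beta,\gamma)$. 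The edges adjacent to $Q_{2,\beta}$ and $Q_{3,\beta}$ are controlled by the Minkowski count alone; the edge through $Q_{4,\gamma}$ requires the full-interval frequency-localized maximal bound from \cite{RoosSeegerSrivastava} combined with the Assouad count applied at a secondary dyadic scale $2^{-j}\in[2^{-k},1]$; and the critical edge $[Q_{3,\beta},Q_{4,\gamma}]$ requires a combined application of both hypotheses.

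The three named corners are precisely the points at which the summation in $k$ produces a logarithmic divergence. To recover the restricted weak type bounds in part (ii), I would apply the scheme to $f=\bbone_F$ and decompose the level set $\{M_E\bbone_F>\lambda\}$ dyadically, trading the logarithm against the extra room afforded by the characteristic-function hypothesis, exactly as in \cite[\S 5]{RoosSeeger}. The main obstacle is the critical edge $[Q_{3,\beta},Q_{4,\gamma}]$: there one must carefully interpolate between the Minkowski count at scale $2^{-k}$ and the Assouad count at a finer secondary scale, and verify that the resulting exponents match the precise coordinates of the corners in \eqref{quadrilateral}. This is where the full quantitative strength of $\chi^E_{\rmA,\gamma}$ (rather than merely $\dim_{\rmqA}E$) enters, and where adapting the Euclidean argument of \cite{RoosSeeger} to the Heisenberg scaling is most delicate.
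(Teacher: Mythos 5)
There is a genuine gap, and it sits exactly where the paper's main technical work lies. Your black-box inputs implicitly assume that the fixed-time averages behave like nondegenerate Fourier integral operators, but on $\bbH^n$ they do not: the rotational curvature determinant is comparable to $\sigma(x,\theta)=\theta_{2n}+\bar\theta\,\ux^\intercal Je_{2n}$ and vanishes on a fold surface. Consequently a single Littlewood--Paley decomposition in $k$ is not enough; one must further decompose each piece according to $|\sigma|\approx 2^{-\ell}$, $0\le\ell\le k/3$. This changes everything downstream: the fixed-time $L^2$ bound carries a loss $2^{\ell/2}$, the $t$-derivative gains only $2^{k-\ell}$ (since $\partial_t\Psi$ lies in the ideal generated by $\sigma$), so the Sobolev/FTC step must be run over a net of scale $2^{\ell-k}$ rather than $2^{-k}$, and the theorem is only closed by summing the resulting bounds over $\ell$ --- which converges precisely when $n\ge 2$. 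Your outline never introduces $\ell$, so it cannot explain where the hypothesis $n\ge 2$ enters, and the exponents in your single-scale inputs would not match the corners of $\cR(\beta,\gamma)$.

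The second, more serious gap is at $Q_{4,\gamma}$. You propose to use ``the full-interval frequency-localized maximal bound from \cite{RoosSeegerSrivastava} combined with the Assouad count applied at a secondary dyadic scale.'' The paper explicitly states that this route is unavailable: for a restricted dilation set one can no longer exploit the space--time rotational curvature of $(x,t)\mapsto f*\mu_t(x)$ that drives the argument in \cite{RoosSeegerSrivastava}. What is actually needed is a Stein--Tomas type estimate $\|\cA^{k,\ell}\|_{L^2\to L^{q_5,\infty}(\R^{2n+1}\times\Zsep)}$ over a $2^{(1+\nu)\ell-k}$-separated subset of the net, proved by a $TT^*$ argument in which the sum over pairs $(t,t')$ is organized by their separation $2^{(1+\nu)\ell-k+j}$ and the Assouad characteristic enters only through the count $\#\Zsepj(t)\lesssim A_2 2^{j\gamma}$. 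The $L^1\to L^\infty$ input for that scheme is the kernel bound $|\cK^{k,\ell}_{t,\breve t}(x,\breve x)|\lesssim 2^{\ell\nu}(1+2^k|t-\breve t|)^{-n}$, whose proof occupies \S\ref{sec: kernelest} and is delicate because the amplitudes $b_\ell$ have derivatives in $(w_{2n},\bar w)$ of size $2^{\ell|\alpha|}$, so stationary phase can only be applied after analyzing how the curvature of $\Sigma_{x,t}$ interacts with the fold surface $\{\sigma=0\}$ (freezing the bad variables in one regime, exploiting the separation hypothesis in another). None of this is present in, or recoverable from, your outline; the remaining ingredients (counting, Bourgain's restricted weak type interpolation at the corners, real interpolation along the edges) do match the paper's strategy.
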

The case $\beta=0$ corresponds to single averages for which a stronger result was proved in \cite{RoosSeegerSrivastava}.
The main ideas for the $L^p$ improving results in Theorems \ref{thm:main} and \ref{thm:mainsharp} follow roughly
the outline in the Euclidean case \cite{SchlagSogge1997, Lee2003, AndersonHughesRoosSeeger, RoosSeeger} {(even though the outcomes are quite different)} and there are also similarities to the treatment of the full maximal operators on Heisenberg groups $\bbH^n$ ($n\ge 2$) in \cite{RoosSeegerSrivastava}. However there is an important difference which makes the proof of the estimate at $Q_{4,\gamma}$ harder. Concretely, in the case of a restricted dilation set we can no longer efficiently use the space-time rotational curvature properties for the
averages
$(x,t)\mapsto f*\mu_t(x)$ which we relied on in \cite{RoosSeegerSrivastava}.
Unlike in the Euclidean case the fixed time averages $f*\mu_t$ do not have nonvanishing rotational curvature but are Fourier integral operators whose canonical relations project with fold singularities. As noticed in \cite{MuellerSeeger2004} this does not severely impact the outcome for the $L^p\to L^p$-inequalities for the maximal functions, however it creates technical problems in the proofs of the sharp $L^p$-improving estimates for $(1/p,1/q)$ away from the diagonals (\cf. \S\ref{sec:Q4}).

\subsection*{Further remarks and results} It is natural to ask what happens if in the above results one drops the assumpton that $E$ be quasi-Assouad regular.
There are many interesting examples, in particular unions
of quasi-Assouad regular sets are typically not quasi-Assouad regular. In the case of finite unions,
one can deduce from the above results that the closure of the sharp region of boundedness exponents is given by a polygon
arising as the intersection of finitely many quadrilaterals of the form $\mathcal{R}(\beta,\gamma)$.
When considering countable unions, more complicated convex regions can arise.
The following result is a direct analogue of a corresponding result in the Euclidean setting.

\begin{thm} \label{thm:generalE}Let $n\ge 2$ and let $\cT_E$ be the type set of $M_E$, i.e. the set of $(\tfrac 1p, \tfrac 1q)$ such that $M_E:L^p(\bbH^n)\to L^q(\bbH^n)$ is bounded. Then the following hold.

(i) Suppose that $E= \cup_{i=1}^N E_i$ where $E_i$ are quasi-Assouad regular sets with $\dim_{\rmM} E_i=\beta_i$ and $\dim_{\rmqA} E_i=\gamma_i$. Then $\overline {\cT_E}= \cap_{i=1}^N\cR(\beta_i,\gamma_i)$.

(ii) If $\dim_{\rmM}E=\beta$, $\dim_{\rmqA} E=\gamma$, then $\cR(\beta,\gamma) \subset \overline{\cT_E} \subset \cR(\beta,\beta) $.

(iii) For every closed convex set $\cT$ satisfying $\cR(\beta,\gamma)\subset \cT \subset \cR(\beta,\beta)$ there is a set $E\subset [1,2]$ with $\dim_{\rmM} E=\beta$ and $\dim_{\rmqA}E=\gamma$ such that $\overline{\cT_E}=\cT$.
\end{thm}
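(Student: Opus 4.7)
The three parts are complementary: (i) is a direct sublinearity/sharpness argument, (ii) combines the positive part of Theorem \ref{thm:main} with necessary conditions that only require the Minkowski dimension, and (iii) is a construction of $E$ via countable unions together with a countable version of (i).

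For part (i), the upper bound on $\cT_E$ follows from $M_E f\le \sum_{i=1}^N M_{E_i} f$ together with Theorem \ref{thm:main}(i) applied to each quasi-Assouad regular piece $E_i$; this gives $\mathrm{int}(\cR(\beta_i,\gamma_i))\subset \cT_{E_i}$ and hence $\bigcap_i\mathrm{int}\,\cR(\beta_i,\gamma_i)\subset \cT_E$. Taking closures yields $\bigcap_{i=1}^N \cR(\beta_i,\gamma_i)\subset \overline{\cT_E}$. Conversely, since $E_i\subset E$ we have $M_{E_i}\le M_E$ pointwise, so $\cT_E\subset\bigcap_i \cT_{E_i}$. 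Each $E_i$ being quasi-Assouad regular, Theorem \ref{thm:main}(ii) forces $\overline{\cT_{E_i}}\subset \cR(\beta_i,\gamma_i)$, giving the reverse inclusion.

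For part (ii), the inclusion $\cR(\beta,\gamma)\subset \overline{\cT_E}$ is exactly Theorem \ref{thm:main}(i) for the given dimensions (taking closures to pick up the boundary). For the second inclusion $\overline{\cT_E}\subset \cR(\beta,\beta)$ we only need the necessary conditions associated to the edges through $Q_{2,\beta}$, $Q_{3,\beta}$, $Q_{4,\beta}$. All of these can be derived from Knapp-type examples that depend only on the \emph{Minkowski} dimension of $E$: one chooses, for small $\delta$, a $\delta$-separated subset of $E$ of cardinality $\gtrsim \delta^{-\beta+\eps}$ and tests $M_E$ against an input function adapted to a suitable $\delta$-slab or tube; the same kind of example used in \cite{RoosSeeger} in the Euclidean setting, adapted to the anisotropic geometry of $\bbH^n$ as in \cite{RoosSeegerSrivastava}, yields all three necessary conditions $1/p\le $ (abscissa of the relevant vertex) etc. Since these are the three edges bounding $\cR(\beta,\beta)$ away from the trivial edges, we get $\cT_E\subset \cR(\beta,\beta)$, and taking closures concludes (ii).

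For part (iii), given $\cT$ with $\cR(\beta,\gamma)\subset \cT\subset\cR(\beta,\beta)$, I would first write $\cT$ as a countable intersection $\cT=\bigcap_{i\in\bbN}\cR(\beta_i,\gamma_i)$ of quadrilaterals with $\beta\le \beta_i\le \gamma_i\le\gamma$; this is possible because the family $\{\cR(\beta',\gamma')\}$ is rich enough (two parameters) to realize any closed convex set sandwiched between $\cR(\beta,\gamma)$ and $\cR(\beta,\beta)$ as an intersection of its tangent quadrilaterals, mirroring the construction in \cite{RoosSeeger}. For each $i$ one picks a quasi-Assouad regular $E_i\subset [1,2]$ with $\dim_{\rmM} E_i=\beta_i$, $\dim_{\rmqA}E_i=\gamma_i$ (e.g.\ self-similar, convex-sequence, or more general examples from \cite[\S 6]{RoosSeeger}), and one adds an auxiliary quasi-Assouad regular set $E_0$ of Minkowski dimension $\beta$ and quasi-Assouad dimension $\gamma$ to anchor the global dimensions. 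Setting $E=\bigcup_{i\ge 0} E_i$ one has $\dim_{\rmM} E=\beta$ and $\dim_{\rmqA} E=\gamma$. The argument of (i) extends verbatim to countable unions for the upper bound $\overline{\cT_E}\subset\bigcap_i\cR(\beta_i,\gamma_i)=\cT$, since one can prove $L^p\to L^q$ bounds on $\mathrm{int}\,\cT$ with constants uniform in $i$ by an $\eps$-loss argument combined with a Borel--Cantelli type summation in scales. For the matching lower bound $\cT\subset\overline{\cT_E}$ one uses that each $E_i\subset E$ forces $\overline{\cT_E}\subset \cR(\beta_i,\gamma_i)$ (via Theorem \ref{thm:main}(ii) applied to $E_i$), hence $\overline{\cT_E}\subset \cT$; combined with $\cT\subset \overline{\cT_E}$ from the positive direction we conclude equality.

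\textbf{Main obstacle.} The delicate step is the countable-union version of (i) underlying part (iii): one must upgrade the finite-sum sublinearity argument into a quantitative estimate that survives when $N=\infty$. This requires showing that for each $(\tfrac1p,\tfrac1q)$ in the interior of $\cT$ one can choose an $\eps>0$ and weight the pieces $M_{E_i}$ so that $\sum_i\|M_{E_i}f\|_q\lesssim \|f\|_p$, using that the $L^p\to L^q$ constants for $M_{E_i}$ depend only on $\chi^{E_i}_{\rmM,\beta_i}$, $\chi^{E_i}_{\rmA,\gamma_i}$, together with the uniform scale $E_i\subset[1,2]$. The construction of the $E_i$ with prescribed dimensions and uniformly bounded characteristics, so that this summation closes, is where the technical work concentrates.
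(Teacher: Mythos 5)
Your outline follows exactly the route the paper intends: the paper gives no proof of Theorem \ref{thm:generalE} but states that it is obtained by running the arguments of \cite[\S 5--7]{RoosSeeger}, i.e.\ sublinearity plus the sharpness of Theorem \ref{thm:main} for the quasi-Assouad regular pieces in (i), the positive result plus Minkowski-only counterexamples in (ii), and a countable-union construction in (iii). Two points should be corrected or made explicit. First, in (iii) the inequalities are reversed: to have $\cR(\beta_i,\gamma_i)\supset\cT\supset\cR(\beta,\gamma)$ one needs $\beta_i\le\beta$ and $\gamma_i\le\gamma$ (with $\sup_i\beta_i=\beta$, $\sup_i\gamma_i=\gamma$ to anchor the dimensions of the union), not $\beta\le\beta_i\le\gamma_i\le\gamma$; as written, any $E_i$ with $\beta_i>\beta$ would produce a quadrilateral not containing $Q_{2,\beta}$ and the intersection could not contain $\cR(\beta,\gamma)$. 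Second, in (ii) the nontrivial edge of $\cR(\beta,\beta)$ is $Q_{3,\beta}Q_{4,\beta}$, and the counterexample of \S\ref{sec:necessary} for that edge is stated under quasi-Assouad regularity; you should note that it applies to arbitrary $E$ with $\dim_{\rmM}E=\beta$ by taking the degenerate case $\theta=0$ (i.e.\ $I_j=[1,2]$, $\varsigma=\delta^{1/2}$), where the hypothesis \eqref{lower-Assouad} reduces to the Minkowski lower bound $N(E,\delta_j)\ge\delta_j^{\eps-\beta}$ and \eqref{necQ3Q4} with $\gamma=\beta$ is exactly the line through $Q_{3,\beta}$ and $Q_{4,\beta}$. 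The remaining assertions you leave unproved in (iii) --- that every admissible convex $\cT$ is a countable intersection of quadrilaterals whose critical edges are supporting lines of $\cT$, and that the $E_i$ can be built with characteristics controlled uniformly enough to sum over scales --- are precisely the content of \cite[\S 6--7]{RoosSeeger}, so citing them is consistent with the paper's own treatment, but be aware that this is where the actual work lies rather than a routine extension of (i).
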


In particular, (ii) and (iii) characterize exactly which closed convex sets can arise as $\overline{\cT_E}$ for some $E\subset [1,2]$. It turns out that the essential sharpness of the results for quasi-Assouad regular dilation sets in Theorem \ref{thm:main} allows one to give a proof of Theorem
\ref{thm:generalE} that is entirely analogous to the arguments
in \cite[\S 5-7]{RoosSeeger} and we will therefore not repeat the details of the constructions here.

Our results have applications to sparse bounds for global maximal operators
given by $f\mapsto \sup_{k\in \bbZ}\sup_{t\in E}|f*\mu_{2^k t}| $.
We refer to the detailed discussion in the paper by Bagchi, Hait, Roncal, Thangavelu \cite{BagchiHaitRoncalThangavelu} who show how (partial) results on $L^p$-improving esimates imply corresponding partial results on sparse bounds for the lacunary and full maximal functions
(see also \cite[\S 8]{RoosSeegerSrivastava} for a discussion of an essentially sharp version of such results). In the same way our results imply sparse bounds for the global maximal operators with restricted dilation sets.

Finally we remark that the behavior of maximal operators associated with the codimension two spherical means considered here is quite different from the behavior of maximal functions associated with {\it hypersurfaces} in the Heisenberg group. Of particular interest here is the Kor\'anyi sphere, for which the sharp $L^p$ improving properties of the local full maximal operator up to endpoints were obtained in a recent paper by one of the authors \cite{Srivastava}, see also partial results about averages in previous work \cite{GangulyThangavelu} by Ganguly and Thangavelu.

\subsection*{Summary of the paper}
\begin{itemize}[topsep=5pt]
\item[--] \S \ref{sec:prelim} contains some known preliminary reductions.
\item[--] \S\ref{sec:basicbounds} contains the proof of the basic bounds at the points $Q_1, Q_{2,\beta}, Q_{3,\beta}$ and states the estimates proving part (i) of Theorems \ref{thm:main}, \ref{thm:mainsharp} (i).
\item[--] \S \ref{sec:Q4} is concerned with the estimate at $Q_{4,\gamma}$. We follow the main argument in \S \ref{sec:Q4} which is the reduction to an $L^2\to L^q$ estimate. This is handled by $TT^*$ arguments similar to \cite{AndersonHughesRoosSeeger}, but we have to overcome difficulties caused by the presence of fold singularities. These arguments complete the proof of part (i) of Theorems \ref{thm:main}, \ref{thm:mainsharp}.
\item[--] In \S \ref{sec: kernelest} we prove the key kernel estimate, Proposition \ref{prop:kernelest}.
\item[--] In \S \ref{sec:necessary} we prove part (ii) of Theorems \ref{thm:main}, \ref{thm:mainsharp} by testing the operator on some old and new counterexamples.
\end{itemize}

\subsection*{Notation}
Partial derivatives will often be denoted by subscript.
$P$ denotes the $(2n-1)\times2n$ matrix $P=(I_{2n-1}\,\, 0)$. \detail{Then $PJ$ is a $(2n-1)\times2n$ matrix with its rows identical to the first $2n-1$ rows of $J$.}
By $A\lesssim B$ we mean that $A\le C\cdot B,$ where $C$ is a constant and $A\approx B$ signifies that $A\lesssim B$ and $B\lesssim A$.

\subsection*{Acknowledgements}
\thanks{The authors would like to thank
the Hausdorff Research Institute of Mathematics and the organizers of the trimester program {\it Harmonic Analysis and Analytic Number Theory} for a pleasant working environment during their visits in the summer of 2021. This work was supported by National Science Foundation grants DMS-2054220 (A.S.) and DMS-2154835 (J.R.), and by a grant from the Simons Foundation (ID 855692, J.R.).
The authors also thank a referee for a thorough reading and suggestions that have improved the exposition.
}

\section{Preliminaries}\label{sec:prelim}
Via suitable rotation and localization arguments (as explained in Section 2.1 of \cite{RoosSeegerSrivastava}), we may assume that $f$ is supported in a small neighborhood of the origin and the measure $\mu$ is supported in a small neighborhood of the vector $e_{2n}$.
Splitting $\ubar y=(y', y_{2n})$ and using the parametrization $\om=(w', g(w')) $ with $g(w')=\sqrt{1-|w'|^2}$ near the north pole $e_{2n}$ of the sphere,
it suffices to consider maximal functions $\sup_{t\in E}|\cR f(x,t)|$ where the integral operator (generalized Radon transform) $\cR $ is defined by
\[\cR f(\ux,\ox,t) =\int \chi(x,t,y') f(y', \fs^{2n}(x,t,y'), \overline \fs(x,t,y')) dy';
\] here $\chi$ is smooth and supported on
\Be\label{apriori-support-assumptions} \{(x',x_{2n}, \ox ,t,y' ): |y'|\le\epsilon,\, |x'|\le\epsilon,\, |x_{2n}-t|\le \epsilon, \, |\ox |\le\epsilon\}.
\Ee
The choice of $\ep$ will be determined by considerations in the proof of Lemma \ref{lem:oio} below, depending on the size of derivatives of phase functions and the choice of $J$, but it is not necessary to track this.
\begin{subequations}
\begin{align} \label{definition of fs}
\fs^{2n}(x,t,y') &=x_{2n}- tg (\tfrac{x'-y'}t) , \\
\bar \fs(x,t,y') &=
\ox
+ \ux^\intercal JP^\intercal y' +\big(x_{2n}-tg(\tfrac{x'-y'}{t})\big)
(\ux^\intercal J e_{2n}
),
\end{align}
\end{subequations}
where $P= \begin{pmatrix}I_{2n-1} &0\end{pmatrix}$ is the matrix of the projection on $\bbR^{2n}$ omitting the last coordinate.
We will need that
\begin{equation}\label{eq:Psidefc}
g(0)=1,\; \nabla g(0)=0,\; g''(0)= -I_{2n-1},\; g'''(0)=0.
\end{equation}
It will be convenient to introduce a nonlinear shear transformation in the $x$-variables
\begin{align*}
\underline \fx(x)&=\underline x,
\\
\overline \fx(x)&=\ox -x_{2n}\,{\underline x}^\intercal Je_{2n}.
\end{align*}
By a change of variables it suffices to prove the relevant estimates for
$\cA f(x,t)= \cR f(\fx(x),t).$
The operator $\cA$ has a Schwartz kernel which is a co-normal distribution given by
\[K(x,t,y)= \chi_1 (x,t,y') \delta_0( S^{2n}(x,t,y') -y_{2n} , \overline S (x,t, y')-\overline y),\] where $\chi_1(x,t,y')=\chi(\fx(x),t,y')$, $\delta_0$ is Dirac measure at the origin in $\bbR^{2}$ and
$(S^{2n}, \bar S)|_{(x,t,y')}= (\fs^{2n}, \bar\fs)|_{ (\fx(x),t,y')}$, that is
\Be\label{eq:PhaseS}\begin{aligned}
S^{2n}(x,t,y')&= x_{2n}- tg(\ttf),
\\
\bar S(x,t,y')&=\ox +
(\ux^\intercal J) (P^\intercal y' -tg(\ttf)
e_{2n}),
\end{aligned}\Ee
with $g$ as in \eqref{eq:Psidefc}.
Note that the function $\chi_1$ is still supported in a set of the form \eqref{apriori-support-assumptions}, where we replace $\ep$ by $O(\ep)$.
It is standard to express $\delta_0$ via the Fourier transform \Be\label{eq:delta=expansion}K(x,t,y) =
\chi_1 (x,t,y') \int_{\theta\in \bbR^{2}}
e^{i\Psi(x,t,y,\theta)} \tfrac{d\theta}{(2\pi)^{2}}, \Ee
where the phase function $\Psi$ is given by \begin{equation}
\label{phasedefn}
\Psi(x,t,y,\theta)=\theta_{2n}(S^{2n}(x,t,y')-y_{2n}) +\bar\theta (\bar S (x,t,y') -\bar y)
\end{equation}
and $\theta=(\theta_{2n}, \bar\theta)$.

We now perform a dyadic decomposition of this modified kernel.
Let $\zeta_0$ be a smooth radial function on $\R^{2}$ with compact support in $\{|\theta|< 1\}$ such that $\zeta_0(\theta)=1$ for $|\theta|\le 1/2$. We set $\zeta_1(\theta)=\zeta_0(\theta/2)-\zeta_0(\theta)$ and $\zeta_j(\theta)= \zeta_1(2^{1-j}\theta)$ for $j\ge 1$.

We set, for $k=0,1,2,\dots$
\begin{equation} \notag
\cA^{k}_t f(x) = \int
\chi_1(x,t,y')\int_{\theta\in \bbR^{2}}\zeta_k(\theta)
e^{i\Psi(x,t,y,\theta)} \tfrac{d\theta}{(2\pi)^{2}}\, f(y) dy.
\end{equation} For $k\ge 1$ this can be rewritten, by a change of variables and the homogeneity of the phase function with respect to $\theta$, as
\begin{equation}
\label{Littlewood-Paley}
\cA^{k}_t f(x) = 2^{2k}\int
\chi_1(x,t,y')\int_{\theta\in \bbR^{2}}\zeta_1(2\theta)
e^{i2^k \Psi(x,t,y,\theta)} \tfrac{d\theta}{(2\pi)^{2}}\, f(y) dy.
\end{equation}

As already observed in \cite{MuellerSeeger2004} these Fourier integral operators lack ``rotational curvature" (i.e. the assumption that the associated canonical relation is locally the graph of a diffeomorphism). Indeed from H\"ormander \cite{hormanderFIO} the ``rotational curvature matrix'' is given by
\[\mathrm{Rot curv}(\Psi) =\begin{pmatrix} \Psi_{xy}&\Psi_{x\theta}\\ \Psi_{\theta y}&\Psi_{\theta\theta} \end{pmatrix}
\] which is equal to
\[
\begin{pmatrix}
\theta_{2n} S^{2n}_{x'y'} + {\bar\theta}\, \bar S_{x'y'} & 0&0&S^{2n}_{x'} & \bar S_{x'}
\\
{\bar\theta}\,e_{2n}^\intercal JP^\intercal &0&0&1&0\\
0&0&0& 0&1\\
(S^{2n}_{y'})^\intercal &-1&0&0&0
\\
(\bar S_{y'})^\intercal &0&-1&0&0
\end{pmatrix}. \]
One calculates
$S^{2n}_{x'}=-\nabla g(\ttf)$, $S^{2n}_{y'}=\nabla g(\ttf)$ and \begin{multline*}
\theta_{2n} S^{2n}_{x'y'} + {\bar\theta}\, \bar S_{x'y'}=\\
t^{-1}(\theta_{2n}+{\bar\theta}\, \ux^\intercal Je_{2n})g''(\ttf)+{\bar\theta}\,\left[PJP^\intercal+B(x,t,y')\right]
\end{multline*}
where the $(2n-1)\times (2n-1)$ matrix $B(x,t,y')$ is given by
\[B(x,t,y')=PJe_{2n}\nabla g(\ttf).\]

With \Be\label{sigmadefinition} \sigma (x,\theta)= \theta_{2n}+{\bar\theta} \,\ux^\intercal Je_{2n}\Ee we see that $\mathrm{Rot curv}(\Psi)$ equals
\[
\begin{pmatrix}
t^{-1}\sigma g''(\ttf)+{\bar\theta}(PJP^\intercal+B)&0&0&-\nabla g(\ttf)&*\\
{\bar\theta}\,e_{2n}^\intercal JP^\intercal &0&0&1&0\\
0&0&0& 0&1\\
\nabla g(\ttf)^\intercal &-1&0&0&0
\\
* &0&-1&0&0
\end{pmatrix}\] and by using elementary row operations and the skew-symmetry of $J$, it is not hard to see that \[\det{(\mathrm{Rot curv}(\Psi))}=\det{\left(t^{-1}\sigma g''(\ttf)+{\bar\theta}\,(PJP^\intercal+B-B^\intercal)\right)}.\] Note that $PJP^\intercal+B-B^\intercal$ is a skew-symmetric matrix of order $2n-1$ and is thus not invertible. Using \cite[Lemma 3.1] {RoosSeegerSrivastava}, we conclude that
\[t^{-1}\sigma g''(\ttf)+{\bar\theta}\,(PJP^\intercal+B-B^\intercal)\]
is invertible if and only if $\sigma\neq 0$.
Indeed from the calculations in \S3 of \cite{RoosSeegerSrivastava} and \cite[Lemma 5.4]{MuellerSeeger2004} it follows that
\[\det(\mathrm{Rotcurv}(\Psi))\approx \sigma (x,\theta).\]
It is natural to use an idea in \cite{PhongStein1991} to further decompose in terms of the size of $\sigma$
(see also \cite{Cuccagna1997}, \cite{MuellerSeeger2004}, \cite{BeltranGuoHickmanSeeger}).
For $k\ge 1$ and
$1\leq \ell\leq \lfloor\tfrac{k}{3}\rfloor$, we define
\Be\label{uell}
u_\ell(x,\theta) =
\begin{cases}
(1-\zeta_0( \tfrac 12\sigma (x,\theta) )) &\text{ if } \ell=0,
\\
\zeta_1(2^\ell \sigma(x,\theta) ) &\text{ if } 1\le \ell< \floor{k/3},
\\
\zeta_0( 2^{\floor{ k/3}} \sigma(x,\theta)) &\text{ if } \ell= \floor{k/3},
\end{cases}
\Ee so that $\sum_{\ell=0}^{\floor{\frac k3}} u_\ell=1$ and $u_\ell$ is supported where $|\sigma|\approx 2^{-\ell} $ when $1\le \ell< \floor{k/3}$. Set
\begin{multline} \label{Akell}
\cA^{k,\ell}_t f(x) \equiv \cA^{k,\ell} f(x,t)\\=2^{2k}\int
\chi_1(x,t,y')\int_{\theta\in \bbR^{2}} \zeta_1(2\theta) u_\ell(x,\theta)
e^{i2^k\Psi(x,t,y,\theta)} \tfrac{d\theta}{(2\pi)^{2}}\, f(y) dy.
\end{multline}
Furthermore, for $k\geq 1$ and $0\leq \ell\leq \lfloor\tfrac{k}{3}\rfloor$, we let $\cM^0_E f(x)= \sup_{t\in E} |\cA^0_t f(x)|,$

\Be\label{cM-definitions}
\cM^{k,\ell}_E f(x)= \sup_{t\in E} |\cA^{k,\ell}_t f(x)|\text{ and }\,\,
\cM^k_E f(x)= \sum_{0\leq\ell\leq \lfloor\tfrac{k}{3}\rfloor}\cM^{k,\ell}_E f(x).\Ee
Since for all $E\subset [1,2]$ the operator $\cM_E^0$ maps $L^p\to L^q$ for all $1\le p\le q\le \infty$ it will be ignored in what follows.

\subsection{\texorpdfstring{\it The operators $\partial_t \cA^{k,\ell}$ versus $\cA^{k,\ell}$}{The operators versus their derivatives}} \label{sec:opvsderiv}

Finally, in order to estimate the maximal operators $\cM^{k,\ell}_E$, we will rely on estimates for $\partial_t \cA^{k,\ell} f(x,t)$.
As in \cite{MuellerSeeger2004}, \cite{BeltranGuoHickmanSeeger} it will be crucial to observe that $\partial_t \Psi$ lies in the ideal generated by $\sigma$, indeed
\Be\label{eq:Psi-tderiv}
\partial_t\Psi(x,y,\theta) = - \partial_t \big( t g(\tfrac{x'-y'}t)\big) \sigma(x,\theta).
\Ee
In view of \eqref{uell}, \eqref{Akell}, \eqref{eq:Psi-tderiv} the operator $2^{\ell-k} \partial_t \cA^{k,\ell} $ will usually have the same quantitative behavior as $\cA^{k,\ell} $.

To expand on this let $\cK^{k,\ell}(x,t) $ be the Schwartz kernel of
$\cA^{k,\ell}$, i.e.
\Be\label{eq:K_k}
\cK^{k,\ell}(x,t,y) =
2^{2k}\int_{\bbR^{2}}e^{i2^k\Psi(x,t,y,\theta)} a_{\ell}(x,t,y',\theta) d\theta, \Ee
with $\Psi$ as in \eqref{phasedefn} and $a_\ell(x,t,y',\theta)=(2\pi)^{-2}\chi_1(x,t,y')\zeta_1(2\theta) u_\ell(x,\theta)$.

For the $t$-derivatives we compute
\begin{align*} \partial_t \cK^{k,\ell}(x,t,y) &= i 2^{3k} \int \partial_t\Psi(x,t,y,\theta) e^{i2^k \Psi(x,t,y,\theta)} a_\ell(x,t,y',\theta)\,d\theta \\& +\,2^{2k} \int e^{i2^k \Psi(x,t,y,\theta)} \partial_t a_\ell(x,t,y',\theta)\,d\theta. \end{align*}
Observe that $\partial_t a_\ell(x,t,y',\theta)=(\partial_t \chi_1(x,t,y')) \zeta_1(2\theta) u_\ell(x,\theta)$ and its derivatives satisfy the same quantitative estimates as $a_\ell$.
Regarding the first summand we use \eqref{eq:Psi-tderiv}.
The expression $\partial_t ( t g(\tfrac{x'-y'}t ))$ does not depend on $\theta$ and its derivatives satisfy uniform bounds.
Since $|\sigma(x,\theta)|\approx 2^{-\ell}$ we see that
the modified amplitude function
\[ \widetilde{a}_{\ell}(x,t,y',\theta) = 2^\ell \sigma(x,\theta) a_\ell(x,t,y',\theta)\]
satisfies the same estimates as $a_\ell$, with a similar statement for the derivatives.
As a consequence of these considerations we see that the operator
$2^{-k+\ell} \partial_t \mathcal{A}^{k,\ell}_t$ will always satisfy the same estimates as $\mathcal{A}^{k,\ell}$, and we usually omit a separate proof for $\partial_t\cA^{k,\ell}_t$.

\section{
Basic estimates}\label{sec:basicbounds}
We use the representation \eqref{eq:K_k} for the Schwartz kernel $\cK^{k,\ell} $ of $\cA^{k,\ell}$
and integration by parts yields the
estimate
\begin{multline} \label{eq:Kkptw}
|\cK^{k,\ell}(x,t,y)|\le C_N \frac{ 2^{k-\ell} } {(1+2^{k-\ell}|y_{2n} - S^{2n} ( x, t,y')|)^N}\\
\times \frac{ 2^{k} } {(1+2^k |\bar y-\bar S (x,t,y') - \ux^\intercal Je_{2n}
(y_{2n} - S^{2n} ( x, t,y')) |)^N} .
\end{multline}

This estimate
yields
\begin{align*} \sup_{t\in [1,2]} \sup_{x,y} |\cK^{k,\ell}(x,t, y)| &\lc 2^{2k-\ell},
\\
\sup_{t\in [1,2]} \sup_{x}\int |\cK^{k,\ell}(x,t, y)| dy &\lc 1,
\\
\sup_{t\in [1,2]} \sup_{y}\int |\cK^{k,\ell}(x,t, y)| dx &\lc 1,
\end{align*}
where for the third inequality we used the specific expressions for $S^{2n}$, $\bar S$ in \eqref{eq:PhaseS}. It follows that
\begin{gather}\label{eqn:trivestimate}
\|\cA_t^{k,\ell}\|_{L^1\to L^1} + \|\cA_t^{k,\ell} \|_{L^\infty\to L^\infty} \lesssim 1,
\\ \label{eqn:trivL1infestimate}
\|\cA_t^{k,\ell}\|_{L^1\to L^\infty} \lesssim 2^{2k-\ell}.
\end{gather}
We also have the $L^2$ fixed-time estimate
\begin{equation}\label{eqn:singleL2estimate}
\| \cA^{k,\ell}_t f\|_{L^2(\R^{2n+1})} \lesssim
2^{-k \frac{2n-1}{2}}2^{\frac{\ell}{2}}\|f\|_2,
\end{equation}
for $0\leq \ell\leq \tfrac{k}{3}$. Display \eqref{eqn:singleL2estimate} was established in \cite{MuellerSeeger2004} via estimates for oscillatory integrals with fold singularities in \cite{Cuccagna1997}, see also the detailed treatment of a relevant extended class of oscillatory integral operators in \cite[\S6]{BeltranGuoHickmanSeeger}.
By interpolation we get
\begin{prop} \label{prop:single-avg-est} Let $n\ge 1$ and $t\in [1,2]$.

(i) For $1\le p\le 2$,
\Be \label{eqn:singleLpest1}
\| {\cA}^{k,\ell}_t f\|_{p} \lesssim 2^{-k\frac{2n-1}{p'}}2^{\frac{\ell}{p'}} \|f\|_p
\Ee
and for $2\leq p\leq \infty$,
\Be \label{eqn:singleLpest2}
\| {\cA}^{k,\ell}_t f\|_{p} \lesssim 2^{-k\frac{2n-1}{p}}2^{\frac{\ell}{p}} \|f\|_p.
\Ee
(ii) For $2\le q\le \infty$,
\Be \label{eqn:singleantidiagest}
\|{\cA}^{k,\ell}_t\|_{q}\lesssim 2^{k (2-\frac{2n+3}{q})}2^{\ell (\frac{3}{q}-1)} \|f\|_{q'}.
\Ee
(iii) The same estimates hold for $2^{-k+\ell} \partial_t \cA^{k,\ell}_t$ in place of $\cA^{k,\ell}$.
\end{prop}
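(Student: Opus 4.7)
The proof is a routine application of Riesz--Thorin interpolation from three fixed-time inputs just established: the $L^1\to L^1$ and $L^\infty\to L^\infty$ bounds from \eqref{eqn:trivestimate}, both of size $O(1)$; the $L^1\to L^\infty$ bound \eqref{eqn:trivL1infestimate} of size $2^{2k-\ell}$; and the $L^2\to L^2$ fold bound \eqref{eqn:singleL2estimate} of size $2^{-k(2n-1)/2}2^{\ell/2}$. The plan is to interpolate along the diagonal for part (i), along the anti-diagonal for part (ii), and then transfer everything to the $t$-derivative in part (iii).

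For part (i) in the range $1\le p\le 2$, I would interpolate the trivial $L^1\to L^1$ bound against \eqref{eqn:singleL2estimate} at Riesz--Thorin parameter $\theta=2/p'$; the product of the two fixed-time constants collapses to the exponents in \eqref{eqn:singleLpest1}. For $2\le p\le\infty$, I would interpolate \eqref{eqn:singleL2estimate} against the trivial $L^\infty\to L^\infty$ bound at $\theta=1-2/p$, yielding \eqref{eqn:singleLpest2}. (The two ranges are also related by duality, since the formal adjoint of $\cA^{k,\ell}_t$ is a generalized Radon transform of the same structural type, so one range can be read off from the other.) For part (ii), I would interpolate the off-diagonal $L^1\to L^\infty$ bound \eqref{eqn:trivL1infestimate} against the $L^2\to L^2$ bound \eqref{eqn:singleL2estimate} at $\theta=2/q$ to obtain an $L^{q'}\to L^q$ estimate; the combination $(1-\theta)(2k-\ell)+\theta\bigl(-k(2n-1)/2+\ell/2\bigr)$ simplifies directly to $k(2-(2n+3)/q)+\ell(3/q-1)$, which is the exponent claimed in \eqref{eqn:singleantidiagest}.

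Part (iii) requires no separate argument. The discussion at the end of \S\ref{sec:opvsderiv} already records that the amplitude of the Schwartz kernel of $2^{-k+\ell}\partial_t\cA^{k,\ell}_t$ differs from that of $\cA^{k,\ell}_t$ only by the factor $2^\ell\sigma(x,\theta)$, which is of unit size on the support of $u_\ell$, together with a harmless lower-order amplitude term arising from $\partial_t\chi_1$. Consequently the three seed estimates \eqref{eqn:trivestimate}, \eqref{eqn:trivL1infestimate}, and \eqref{eqn:singleL2estimate} hold for $2^{-k+\ell}\partial_t\cA^{k,\ell}_t$ with the same constants, and the interpolation steps in (i) and (ii) transfer verbatim. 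There is no genuine obstacle here; this proposition is essentially a packaging of the three basic inputs into the unified form that will be used repeatedly in the ensuing maximal function arguments.
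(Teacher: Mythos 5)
Your proposal is correct and matches the paper's own proof: part (i) is Riesz--Thorin interpolation of \eqref{eqn:trivestimate} with \eqref{eqn:singleL2estimate}, part (ii) interpolates \eqref{eqn:trivL1infestimate} with \eqref{eqn:singleL2estimate}, and part (iii) is deferred to the amplitude comparison in \S\ref{sec:opvsderiv}, exactly as in the paper. Your exponent arithmetic at the stated interpolation parameters checks out.
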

\begin{proof}
Part (i) follows by interpolating between \eqref{eqn:trivestimate} and \eqref{eqn:singleL2estimate}, while Part (ii) is a consequence of interpolating between \eqref{eqn:trivL1infestimate} and \eqref{eqn:singleL2estimate}.
For part (iii) see the considerations in \S\ref{sec:opvsderiv}.
\end{proof}
The above estimates give the following bounds for the maximal operator $\cM^{k,\ell}_E$.

\begin{prop} For all $n=1,2,3,\dots$ we have the following bounds for Schwartz functions $f$ on $\bbR^{2n+1}$.
\label{prop:Akmax}

(i) For $1\le p\le \infty$,
\Be\label{eqn:maxLpest}
\| \cM^{k,\ell}_E f\|_p \lc N(E, 2^{\ell-k})^{1/p}
2^{-k(2n-1)\min(\frac1p,\frac1{p'})}2^{\ell\min(\frac1p,\frac1{p'})}\|f\|_p.\Ee

(ii) For $2\le q\le \infty$,
\Be \label{eqn:maxantidiagest}
\|\cM^{k,\ell}_E f\|_{q} \lesssim N(E, 2^{\ell-k})^{1/q}
2^{k (2-\frac{2n+3}{q})} 2^{\ell (\frac{3}{q}-1)} \|f\|_{q'}.\Ee

(iii) If $\dim_{\rmM} E=\beta$, then for every $\eps>0$
\[ \| \cM^{k,\ell}_E f\|_p \lc_\eps
2^{(k-\ell)\frac{\beta+\eps}p}
2^{-k(2n-1)\min(\frac1p,\frac1{p'})}2^{\ell\min(\frac1p,\frac1{p'})}\|f\|_p, \quad 1\le p\le\infty
\]
and
\[ \|\cM^{k,\ell}_E f\|_{q} \lesssim_\eps 2^{(k-\ell)\frac{\beta+\eps}q}
2^{k (2-\frac{2n+3}{q})} 2^{\ell (\frac{3}{q}-1)} \|f\|_{q'}, \quad 2\le q\le\infty.
\]

\end{prop}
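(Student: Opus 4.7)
The plan is to combine a standard covering-plus-Sobolev argument with the fixed-time bounds of Proposition \ref{prop:single-avg-est}. The key input is that, by part (iii) of that proposition together with the observations of \S\ref{sec:opvsderiv}, the operator $2^{\ell-k}\partial_t \cA^{k,\ell}_t$ satisfies the very same $L^p\!\to\! L^p$ and $L^{q'}\!\to\! L^q$ estimates as $\cA^{k,\ell}_t$. Thus the natural $t$-scale at which the family $\{\cA^{k,\ell}_t\}$ varies by $O(1)$ is $\delta:=2^{\ell-k}$, and covering arguments on $E$ at this scale will be tight.

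Concretely, for part (i) I would fix $\delta=2^{\ell-k}$, cover $E$ by $N(E,\delta)$ intervals $I_j$ of length $\delta$, set $E_j=E\cap I_j$, and pick $t_j\in E_j$ (if nonempty). By the fundamental theorem of calculus and Hölder's inequality in $t$,
\begin{equation*}
\sup_{t\in E_j}|\cA^{k,\ell}_t f(x)|^p \lesssim |\cA^{k,\ell}_{t_j} f(x)|^p + \delta^{p-1}\int_{I_j} |\partial_t \cA^{k,\ell}_t f(x)|^p\,dt,
\end{equation*}
and after integrating in $x$ and summing in $j$,
\begin{equation*}
\|\cM^{k,\ell}_E f\|_p^p \lesssim N(E,\delta)\sup_{t\in[1,2]}\|\cA^{k,\ell}_t f\|_p^p + \delta^{p}N(E,\delta)\sup_{t\in [1,2]}\|\partial_t \cA^{k,\ell}_t f\|_p^p.
\end{equation*}
Since Proposition \ref{prop:single-avg-est}(iii) gives $\|\partial_t\cA^{k,\ell}_t f\|_p\lesssim 2^{k-\ell}\|\cA^{k,\ell}_t f\|_p$, the factor $\delta^{p}(2^{k-\ell})^p=1$ and both summands are dominated by $N(E,\delta)$ times the $L^p\to L^p$ bound from Proposition \ref{prop:single-avg-est}(i), which is exactly \eqref{eqn:maxLpest}.

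For part (ii), the argument is formally identical with $q$ replacing $p$ and the anti-diagonal estimate \eqref{eqn:singleantidiagest} (again combined with the derivative version in Proposition \ref{prop:single-avg-est}(iii)) in place of \eqref{eqn:singleLpest1}. This yields \eqref{eqn:maxantidiagest}. Finally, part (iii) is immediate: when $\dim_{\rmM} E=\beta$, the definition \eqref{eq:Minkentropy} gives $N(E,2^{\ell-k})\le c_\eps 2^{(k-\ell)(\beta+\eps)}$, and substituting this into (i) and (ii) produces the stated bounds.

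There is no genuine obstacle here beyond verifying the derivative estimate, which has already been dealt with in \S\ref{sec:opvsderiv}; the delicate point is merely the observation that $\delta=2^{\ell-k}$ is the unique scale at which the trivial Sobolev-type discretization is lossless, so that both contributions in the pointwise bound balance and no further decomposition is needed.
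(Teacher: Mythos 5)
Your proof is correct and follows essentially the same route as the paper: cover $E$ at the scale $2^{\ell-k}$, apply the fundamental theorem of calculus, and absorb the derivative term using the fact that $2^{\ell-k}\partial_t\cA^{k,\ell}_t$ obeys the same fixed-time bounds as $\cA^{k,\ell}_t$ (the paper states the pointwise bound $\cM^{k,\ell}_Ef\le\sup_{t\in\cZ_{k-\ell}}(|\cA^{k,\ell}_tf|+\int_0^{2^{\ell-k}}|\partial_s\cA^{k,\ell}_{t+s}f|\,ds)$ and then invokes Proposition \ref{prop:single-avg-est}, which is your argument with Minkowski in place of your H\"older step). The only cosmetic caveat is that your $p$-th power formulation should be replaced by the trivial supremum bound when $p=\infty$ (respectively $q=\infty$), where no counting factor is needed.
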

\begin{proof}
The fundamental theorem of calculus implies the pointwise bound
\[\cM^{k,\ell}_E f(x)\leq \sup_{t\in \cZ_{k-\ell}} \Big( |\cA^{k,\ell}_{t}f(x)|+\int_0^{2^{-k+\ell}}|\partial_s \cA^{k,\ell}_{t+s}f(x)|\,ds\Big),\]
where $\cZ_{k-\ell}$ consists of the left endpoints of a minimal collection of intervals of length $2^{-k+\ell}$ that covers $E$.
With this in hand, parts (i) and (ii) follow directly from Proposition \ref{prop:single-avg-est}.
Part (iii) is immediate since
\[N(E,2^{-k+\ell})\lesssim_\eps 2^{(k-\ell) (\beta+\eps)}\]
when $\dim_{\rmM} E=\beta$.
\end{proof}
For $\ell\geq 0$, we introduce the operator
\begin{equation}
\label{eqn:def ml op}
\fM_E^{\ell} :=\sum_{k\ge 3\ell} \cM_E^{k,\ell}.
\end{equation}
\begin{prop}
Let $n\ge 2$, $\beta\in (0,1]$ and assume that \[ \sup_{\delta>0} \chi^E_{\rmM,\beta}(\delta)\equiv \sup_{\delta>0}\delta^{\beta} N(E,\delta)\le A_1<\infty. \]
Let
$Q_{2,\beta}=(\tfrac{2n-1}{2n-1+\beta},\tfrac{2n-1}{2n-1+\beta})$ and
$ Q_{3,\beta} = (\tfrac{2n+1-\beta}{2n+3-\beta}, \tfrac{2}{2n+3-\beta})$.

(i) If $(1/p, 1/q)$ is one of the points
$Q_{2,\beta}$, $Q_{3,\beta}$ then there is $\alpha(p,q)>0$ such that
\[ \|\fM^\ell_E f\|_{L^{q,\infty} }\lc A_1^{1/q} 2^{-\ell\alpha(p,q)} \|f\|_{L^{p,1} }\]
and
$M_E: L^{p,1}\to L^{q,\infty}$ is bounded.

(ii) If $(1/p, 1/q)$ belongs to the open line segment connecting
$Q_{2,\beta}$ and $Q_{3,\beta}$ then
\[ \|M_E f\|_{L^{q,r}} \lc A_1^{1/q} \|f\|_{L^{p,r} } \]
for all $r>0$, in particular $M_E$ is bounded from $L^p$ to $L^q$.
\end{prop}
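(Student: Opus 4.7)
The plan is to establish restricted weak type bounds for $\fM^\ell_E$ at each of $Q_{2,\beta}$ and $Q_{3,\beta}$ with geometric decay $2^{-\ell\alpha}$, sum in $\ell$, and finally apply real interpolation between the two vertex bounds. The key input will be Proposition \ref{prop:Akmax}(iii); the main technique will be a Bourgain-style summation argument that trades off-critical strong-type bounds for a restricted weak endpoint at the critical exponent.

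For the vertex $Q_{2,\beta}$, I would insert the Minkowski estimate $N(E,2^{\ell-k})\lc_\eps A_1 2^{(k-\ell)(\beta+\eps)}$ into the diagonal $L^p\to L^p$ bound of Proposition \ref{prop:Akmax}(iii) for $p\le 2$, obtaining
\[\|\cM^{k,\ell}_E f\|_p \lc_\eps A_1^{1/p}\,2^{k\,a_\eps(p)}\,2^{\ell\,b_\eps(p)}\,\|f\|_p,\]
where $a_\eps(p)=(\beta+\eps)/p-(2n-1)/p'$ and $b_\eps(p)=1/p'-(\beta+\eps)/p$. The exponent $a_\eps(p)$ vanishes, modulo $\eps$, exactly at $1/p_2=(2n-1)/(2n-1+\beta)$, and crucially $b_0(p_2)=-\beta(2n-2)/(2n-1+\beta)<0$ thanks to $n\ge 2$. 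I would then select $p_-<p_2<p_+\le 2$ close to $p_2$ so that $a_\eps(p_-)>0>a_\eps(p_+)$ and $b_\eps(p_\pm)<0$, and apply a standard real-interpolation summation lemma (as in \cite{AndersonHughesRoosSeeger, RoosSeeger}) to the series $\fM^\ell_E=\sum_{k\ge 3\ell}\cM^{k,\ell}_E$ to conclude
\[\|\fM^\ell_E f\|_{L^{p_2,\infty}}\lc A_1^{1/p_2}\,2^{-\ell\alpha_2}\,\|f\|_{L^{p_2,1}}\]
for some $\alpha_2>0$. The argument at $Q_{3,\beta}$ is entirely analogous using the antidiagonal $L^{q'}\to L^q$ bound in Proposition \ref{prop:Akmax}(iii): the $k$-exponent $2-(2n+3-\beta-\eps)/q$ vanishes at $1/q_3=2/(2n+3-\beta)$, the $\ell$-exponent $3/q-1-(\beta+\eps)/q$ is strictly negative there when $n\ge 2$, and choosing $q_-<q_3<q_+$ on either side produces the analogous restricted weak bound $\|\fM^\ell_E f\|_{L^{q_3,\infty}}\lc A_1^{1/q_3} 2^{-\ell\alpha_3}\|f\|_{L^{p_3,1}}$ with $(1/p_3,1/q_3)=Q_{3,\beta}$ and $\alpha_3>0$.

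With these vertex bounds in hand, the pointwise inequality $M_E\lc \cM^0_E+\sum_{\ell\ge 0}\fM^\ell_E$ together with the geometric decay in $\ell$ allows me to sum and obtain restricted weak type bounds for $M_E$ itself at both $Q_{2,\beta}$ and $Q_{3,\beta}$, completing part (i). Part (ii) will then follow from real interpolation between these two $L^{p,1}\to L^{q,\infty}$ endpoints, which yields the strong $L^{p,r}\to L^{q,r}$ inequality for every $r>0$ on the open segment. The main obstacle I expect is the correct execution of the Bourgain summation at each vertex: at the critical exponent the naive sum $\sum_k\|\cM^{k,\ell}_E\|_{p\to p}$ diverges, and one must combine the off-critical strong bounds with oppositely signed $k$-exponents via real interpolation so as to land at a restricted weak (rather than strong) endpoint, while simultaneously preserving the geometric decay in $\ell$ that drives the subsequent summation.
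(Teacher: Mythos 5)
Your proposal is correct and follows essentially the same route as the paper: insert the Minkowski bound $N(E,2^{\ell-k})\le A_1 2^{(k-\ell)\beta}$ into Proposition \ref{prop:Akmax}, apply Bourgain's restricted weak type interpolation trick to $\fM^\ell_E=\sum_{k\ge 3\ell}\cM^{k,\ell}_E$ at each of $Q_{2,\beta}$ and $Q_{3,\beta}$, observe that the resulting $\ell$-exponents $-\beta\tfrac{2n-2}{2n-1+\beta}$ and $\tfrac{3-\beta-2n}{2n+3-\beta}$ are negative for $n\ge 2$ so the $\ell$-sum converges, and deduce (ii) by real interpolation along the segment. The only cosmetic difference is that you invoke the $\eps$-lossy part (iii) where the hypothesis $\sup_\delta\delta^\beta N(E,\delta)\le A_1$ lets you use parts (i)--(ii) directly with the clean constant $A_1^{1/q}$, but this does not affect the argument.
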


\begin{proof}
We observe that part (ii) follows from part (i) by real interpolation (note that the line connecting $Q_{2,\beta}$ and $Q_{3,\beta} $ has a positive finite slope).

We have, for $1\le p\le 2$,
\[
\| \cM^{k,\ell}_E f\|_p \lc 2^{-k( 2n-1-\frac{2n-1+\beta}p)} 2^{\ell(1-\frac{1+\beta}{p})}
\|f\|_p,\]
by Proposition \ref{prop:Akmax} (i).
By Bourgain's restricted weak type interpolation trick (see \cite{Bourgain85}, or the appendix of \cite{CarberySeegerWaingerWright1999}), applied to $\fM^{\ell}_E$ defined in \eqref{eqn:def ml op},
we get
\[
\Big\| \fM^\ell_E f\Big\|_{L^{p_{\mathrm cr},\infty}}
\lc
2^{\ell(1-\frac{1+\beta}{p_{\mathrm cr}})}
\|f\|_{L^{p_{\mathrm cr},1}}, \quad p_{\mathrm cr}= \tfrac{2n-1+\beta}{2n-1} \]
and we have $1-\frac{1+\beta}{p_{\mathrm{cr}}} =-\beta\frac{2n-2}{2n-1+\beta}$ so that we can sum in $\ell\ge 0$ if $n\ge 2$. The asserted restricted weak type inequality for $Q_{2,\beta}$ follows.

To prove the estimate for $Q_{3,\beta}$ we note that for $2\le q\le \infty$ we get
from Proposition \ref{prop:Akmax} (ii)
\[
\|\cM_E^{k,\ell} f\|_q\lc A_1^{1/q} 2^{k(2-\frac{2n+3-\beta}{q})} 2^{\ell (\frac {3-\beta}{q}-1)} \|f\|_{q'}
\]
and again by the restricted weak type interpolation result,
\[\|\fM_E^\ell f\|_{q}\lc 2^{\ell(\frac{3-\beta}{q_{\mathrm cr}} -1)}\|f\|_{q'}, \quad q_{cr}= \tfrac{2n+3-\beta}{2}.\]
We have
$\frac{3-\beta}{q_{\mathrm cr}} -1= \frac{3-\beta-2n}{3-\beta+2n} $ which is negative for $n\ge 2$. Summing in $\ell$ yields the desired result on $M_E$.
\end{proof}
Finally, we state the main estimate at the vertex
\Be\label{Q4display} Q_{4,\ga}=(\tfrac{1}{p_4}, \tfrac{1}{q_4})=(\tfrac{n(2n+1)}{2n^2+3n+2\gamma},\tfrac{2n}{2n^2+3n+2\gamma}).
\Ee
\begin{prop}
\label{prop: maxL2qest}
Let $n\geq 1$, $\gamma\in (0,1)$ and assume that
\[ \sup_{\delta>0}\chi^E_{\rmA,\gamma}(\delta) \equiv\sup_{\delta>0} \sup_{|I|>\delta} (\delta/|I|)^\gamma N(E\cap I, \delta)\leq A_2<\infty\,.\]
Let $p_4$, $q_4$ as in \eqref{Q4display}. Then
\begin{equation}
\label{eqn:Q4est}
\|\fM^\ell_E f\|_{q_4, \infty}\lesssim_{b}
A_2^{1/q_4} 2^{-\ell b}\|f\|_{p_4,1},\quad
\text{ for $b<\tfrac{n(2n-3)+2\gamma(n-1)}{2n^2+3n+2\gamma} $}.
\end{equation}
If in addition $n\ge 2$ then also \Be \label{nge2concl}\|M_E f\|_{L^{q_4,\infty}} \lc A_2^{1/q_4} \|f\|_{L^{p_4,1} }. \Ee

\end{prop}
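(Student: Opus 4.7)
I will follow the strategy announced in the summary of \S\ref{sec:Q4} and reduce the restricted weak type bound at $Q_{4,\gamma}$ to an $L^2\to L^{q^*}$ estimate for $\fM^\ell_E$, where $q^*=2(n+\gamma)/n$ is the exponent determined by the condition that the line from the trivial point $(1,0)$ through $(1/2,1/q^*)$ passes through $Q_{4,\gamma}$. A direct computation from \eqref{Q4display} confirms this choice.

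The first step is this interpolation reduction. Bourgain's restricted weak-type interpolation trick, applied to $\fM^\ell_E=\sum_{k\ge 3\ell}\cM^{k,\ell}_E$, combines the trivial bound $\|\cM^{k,\ell}_E\|_{L^1\to L^\infty}\lesssim 2^{2k-\ell}$ from \eqref{eqn:trivL1infestimate} with an $L^2\to L^{q^*}$ bound on $\cM^{k,\ell}_E$ that carries a negative exponent of $2^k$. For characteristic functions $f=\bbone_F$ the optimal choice between the two estimates depends on the level parameter and on $|F|$, and the resulting maxima sum geometrically in $k\ge 3\ell$. The powers of $2^\ell$ from the two sides combine to produce exactly the decay rate $2^{-\ell b}$ with $b<\tfrac{n(2n-3)+2\gamma(n-1)}{2n^2+3n+2\gamma}$.

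The central estimate is thus the $L^2\to L^{q^*}$ bound for $\cM^{k,\ell}_E$ with the correct power of $A_2$. Via the fundamental theorem of calculus and the $2^{-k+\ell}\partial_t\cA^{k,\ell}$ identity of \S\ref{sec:opvsderiv}, this reduces to an $L^2\to L^{q^*}$ bound for the vector-valued operator $f\mapsto(\cA^{k,\ell}_{t}f)_{t\in Z}$, where $Z$ is a minimal set of points of $E$ spaced at scale $2^{\ell-k}$. A $TT^*$ argument then reduces matters further to bounds on $\sum_{t,s\in Z}\cA^{k,\ell}_t(\cA^{k,\ell}_s)^*$. Decomposing the pairs by dyadic scale $|t-s|\approx 2^{-j}$ with $0\le j\le k-\ell$, the number of pairs at scale $2^{-j}$ is controlled by the hypothesis $\sup_\delta \chi^E_{\rmA,\gamma}(\delta)\le A_2$: covering $Z$ by intervals of length $2^{-j}$ and counting points of $Z$ in each interval gives a total of order $A_2\,N(E,2^{\ell-k})\,(2^{\ell-k+j})^\gamma$ pairs. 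This multiplicative structure is precisely where the factor $A_2^{1/q^*}$ in the final estimate appears after taking $1/q^*$-powers.

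The principal obstacle is the analysis of the $TT^*$ kernel in the presence of the fold singularities of the canonical relation of $\cA^{k,\ell}_t$, flagged in the introduction. This is not covered by the Euclidean arguments of \cite{AndersonHughesRoosSeeger}, which rely on nondegenerate rotational curvature. The resolution is carried out in the key kernel estimate, Proposition \ref{prop:kernelest}, whose proof is deferred to \S\ref{sec: kernelest}; it provides the requisite pointwise decay of the $TT^*$ kernel in $|t-s|$ at the localization scale $|\sigma|\approx 2^{-\ell}$ from \eqref{uell}. Once the $L^2\to L^{q^*}$ bound is established and combined via Bourgain's trick with \eqref{eqn:trivL1infestimate}, the restricted weak type inequality \eqref{eqn:Q4est} follows. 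Finally, for $n\ge 2$ the upper bound on $b$ is strictly positive, so we may choose $b>0$; summing the bounds for $\fM^\ell_E$ over $\ell\ge 0$ then yields the unconditional $L^{p_4,1}\to L^{q_4,\infty}$ bound \eqref{nge2concl} for $M_E$.
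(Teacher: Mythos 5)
Your proposal follows essentially the same route as the paper: reduce to the $L^2\to L^{q_5,\infty}$ Stein--Tomas type bound of Proposition \ref{prop:q5} with $q_5=2(n+\gamma)/n$ (via the fundamental theorem of calculus and the $2^{-k+\ell}\partial_t\cA^{k,\ell}$ considerations), prove that bound by a $TT^*$ argument with a dyadic decomposition in $|t-\breve t|$, the Assouad characteristic controlling the number of pairs at each scale, and the fold-adapted kernel estimate of Proposition \ref{prop:kernelest}, and finally apply Bourgain's interpolation trick against the $L^1\to L^\infty$ bound to land at $Q_{4,\gamma}$ with the stated $2^{-\ell b}$ gain, summable in $\ell$ when $n\ge2$. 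This is exactly the paper's argument; the only points glossed over are minor (the preliminary splitting of $\cZ_{k-\ell}$ into $O(2^{\nu\ell})$ separated families needed before Proposition \ref{prop:kernelest} applies, and a harmless imprecision in the pair-counting exponent).
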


The assertion for $M_E$ follows from \eqref{eqn:Q4est} after summing in $\ell$. Inequality \eqref{eqn:Q4est} will be proven in the next section as a consequence of Proposition \ref{prop:q5} below.

\section{\texorpdfstring{ Estimates at $Q_{4,\gamma}$}{The point Q4}}\label{sec:Q4}
After a decomposition of $E$ into a finite number of subsets we may assume that \Be \label{eq:t-diamter} \diam(E)<\ep,
\Ee
with $\ep$ as in \eqref{apriori-support-assumptions}.
Given a non-negative integer $m$, let $\cI_{m}(E)$ denote the set of all dyadic intervals of the form $(\nu2^{-m}, (\nu+1)2^{-m})$ (with $\nu\in \mathbb{Z}$) which intersect $E$. Then one observes that $\#\cI_{m}(E)\lesssim N(E,2^{-m})$. Thus, for any interval $I$ of length at least $2^{-m}$, we have
\[\# \cI_{m}(E\cap I)\lesssim \chi_{\mathrm{A},\gamma}^E(2^{-m})\, |I|^{\gamma}2^{m\gamma}.\]
Further, let $\cZ_m(E)$ denote the set of left endpoints of intervals $I_{\nu}\in \cI_{m}(E)$, endowed with the counting measure.
The main result of this section is the following Stein-Tomas type estimate for $\cA^{k,\ell}$.
\begin{prop}
\label{prop:q5}
Let $n\geq 1$ and $q_5=\frac{2(n+\gamma)}{n}$. Suppose
\[ \sup_{\delta>0}\chi^E_{\rmA,\gamma}(\delta)
\leq A_2<\infty\,,\]
where $\chi^E_{\rmA,\gamma}(\delta)$ is as defined in \eqref{eq:Assouadchar}.
Then for any $b_1>\frac{n(1-\gamma) }{2(n+\gamma) } $, we have
\Be \label{eqn:Q5single}
\| \cA^{k,\ell} \|_{L^2(\bbR^{2n+1})\to L^{q_5, \infty}(\R^{2n+1}\times \cZ_{k-\ell})} \lc_{b_1}
2^{-k(\frac{2n+1}{q_5} -1) }2^{\ell b_1 }.
\Ee
\end{prop}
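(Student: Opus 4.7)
The strategy combines a $TT^*$ reduction, a dyadic decomposition in the time-separation parameter, and pair-counting via the Assouad spectrum bound, in the spirit of the arguments used for $L^2 \to L^q$ estimates in \cite{AndersonHughesRoosSeeger}, adapted to the fold setting.

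First, by standard $TT^*$ duality for weak-type inequalities into a Lorentz space, it suffices to prove
\[
\|\cB\|_{L^{q_5',1}(\R^{2n+1}\times \cZ_{k-\ell}) \to L^{q_5,\infty}(\R^{2n+1}\times \cZ_{k-\ell})}
\lesssim_{b_1} 2^{-2k(\frac{2n+1}{q_5}-1)} 2^{2\ell b_1}
\]
for the operator $\cB:= \cA^{k,\ell}(\cA^{k,\ell})^*$ given by
\[
(\cB F)(x,t')=\sum_{t\in\cZ_{k-\ell}}\cA^{k,\ell}_{t'}(\cA^{k,\ell}_{t})^* F(\cdot,t)(x),
\]
where $\R^{2n+1}$ carries Lebesgue and $\cZ_{k-\ell}$ the counting measure.

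Second, I would decompose $\cB = \sum_{0\le j\le k-\ell}\cB_j$, with $\cB_j$ restricting the inner sum to pairs $(t,t')$ with $|t-t'|\sim 2^{-j}$; the upper range $j \le k-\ell$ comes from the minimal spacing $2^{-k+\ell}$ of $\cZ_{k-\ell}$, and $j=0$ captures the diameter scale via \eqref{eq:t-diamter}. For each fixed pair $(t,t')$ with $|t-t'|\sim 2^{-j}$ I would invoke the key single-pair kernel estimate, Proposition \ref{prop:kernelest} in \S\ref{sec: kernelest}, to obtain an $L^1\to L^\infty$ bound of the form $2^{2k-\ell} 2^{-j\alpha}$ for a suitable exponent $\alpha>0$, together with the trivial $L^2\to L^2$ bound $2^{-k(2n-1)}2^{\ell}$ obtained by composing \eqref{eqn:singleL2estimate} with its adjoint; complex interpolation between these endpoints yields an $L^{q_5'}\to L^{q_5}$ bound per pair. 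The Assouad characteristic hypothesis then provides the pair-counting bound: for each fixed $t\in \cZ_{k-\ell}$,
\[
\#\{t'\in\cZ_{k-\ell}\,:\,|t-t'|\sim 2^{-j}\}\lesssim A_2\cdot 2^{\gamma(k-\ell-j)}.
\]

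Third, combining the single-pair $L^{q_5'} \to L^{q_5}$ estimates with the counting bound, and applying Bourgain's restricted weak type interpolation trick (used earlier for the estimate at $Q_{2,\beta}$) or a direct vector-valued argument, yields $L^{q_5',1}\to L^{q_5,\infty}$ bounds on each $\cB_j$. The choice $q_5=2(n+\gamma)/n$ is precisely the one that makes the $j$-sum of these bounds geometric; summing in $j$ and absorbing the $\varepsilon$-slack from the Assouad characteristic produces exactly the loss $2^{2\ell b_1}$ with $b_1$ strictly greater than $n(1-\gamma)/(2(n+\gamma))$ claimed in \eqref{eqn:Q5single}.

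The main obstacle will be Proposition \ref{prop:kernelest}. Because the composition $\cA^{k,\ell}_{t'}(\cA^{k,\ell}_{t})^*$ is a Fourier integral operator whose canonical relation projects with fold singularities rather than being a graph (this is the obstruction noted in \S\ref{sec:prelim} via the computation of $\det(\mathrm{Rotcurv}(\Psi))\approx \sigma$), standard stationary phase does not immediately produce the needed pointwise decay in $|t-t'|$. One must exploit the microlocal support condition $|\sigma|\approx 2^{-\ell}$, separate out the degenerate fold direction, and show that the remaining variables furnish sufficient curvature to give quantitative decay in $2^{-j}$; this loss at the fold is precisely what produces the factor $2^{\ell/2}$ in \eqref{eqn:singleL2estimate} and, after squaring and interpolation, the threshold $b_1 > n(1-\gamma)/(2(n+\gamma))$.
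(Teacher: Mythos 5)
Your plan follows the paper's proof almost step for step: $TT^*$, dyadic decomposition in $|t-t'|$, the kernel estimate of Proposition \ref{prop:kernelest} as the $L^1\to L^\infty$ input, the counting bound $\#\{t'\in\cZ_{k-\ell}: |t-t'|\sim 2^{-j}\}\lesssim A_2 2^{\gamma(k-\ell-j)}$ from the Assouad characteristic feeding the $L^2$ input, interpolation, geometric summation for $q>q_5$, and Bourgain's trick at $q=q_5$. Two ingredients are missing, one presentational and one substantive. First, Proposition \ref{prop:kernelest} is stated for the oscillatory integral operators $T^{k,\ell}$ obtained from $\cA^{k,\ell}$ by a partial scaled Fourier transform in $(y_{2n},\bar y)$ together with Plancherel; the $TT^*$ argument is run on $T^{k,\ell}$, not on $\cA^{k,\ell}$ directly, so that the kernel of $T_t^{k,\ell}(T_{t'}^{k,\ell})^*$ is a single oscillatory integral in $w=(y',\theta)$ amenable to stationary phase.

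Second, and this is where the plan as written would break: Proposition \ref{prop:kernelest} only holds under the separation hypothesis $|t-\breve t|\ge 2^{\ell-k}2^{\nu\ell}$ for some $\nu>0$ (condition \eqref{eq:ell-restr}); at separations between $2^{\ell-k}$ and $2^{\ell-k}2^{\nu\ell}$ the fold degeneracy (the amplitude's derivatives in the $(w_{2n},\bar w)$ directions grow like $2^{\ell|\alpha|}$) defeats the stationary phase argument. Your decomposition includes the scales $j$ near $k-\ell$, where the kernel estimate is therefore unavailable. The paper's fix is to pigeonhole $\cZ_{k-\ell}$ into $O(2^{\ell\nu})$ subsets, each $2^{(1+\nu)\ell-k}$-separated, so that within each subset the only pair violating \eqref{eq:ell-restr} is the diagonal $t=t'$, which is handled by the $L^2$ bound alone. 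This pigeonholing, together with the factor $2^{\ell\nu}$ appearing in the kernel estimate itself, is the actual source of the loss $2^{\ell b_1}$ with $b_1$ strictly above $\frac{n(1-\gamma)}{2(n+\gamma)}$ --- not an ``$\varepsilon$-slack from the Assouad characteristic'', which is assumed bounded by $A_2$ with no slack. Indeed, with $\nu=0$ the interpolation exponent at $q_5$ comes out to exactly $\frac{n(1-\gamma)}{2(n+\gamma)}$, and the strict inequality $b_1>\frac{n(1-\gamma)}{2(n+\gamma)}$ is precisely the room needed to absorb the two $\nu$-losses. Once these two points are incorporated, your argument coincides with the paper's.
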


\begin{proof}[Proof that Proposition \ref{prop:q5} implies Proposition \ref{prop: maxL2qest}]By the fundamental theorem of calculus,
\[\cM^{k,\ell}_Ef(x)\leq \sup_{t\in \cZ_{k-\ell}} \Big(|\cA^{k,\ell}_{t}f(x)|+\int_0^{2^{\ell-k}}|\partial_s\cA^{k,\ell}_{t+s}f(x)|\,ds\Big).\]
Thus, taking an $L^{q_5, \infty}$ norm on both sides and using Proposition \ref{prop:q5}, we conclude that
\begin{align} \notag
\|\cM^{k,\ell}_E f\|_{L^{q_5, \infty}}&\leq \| \cA^{k,\ell} f\|_{L^{q_5, \infty}(\R^{2n+1}\times \cZ_{k-\ell})}
\\ \notag
&\qquad\qquad +\int_0^{2^{\ell-k}}\|\partial_s\cA^{k,\ell}_{t+s}f(x)\|_{L^{q_5, \infty}(\R^{2n+1}\times \cZ_{k-\ell})}\,ds \\ \label{MEkell-alln}
&\lesssim 2^{-k(\frac{2n+1}{q_5} -1) }2^{\ell\frac{n}{2}\big(\frac{1-\gamma}{n+\gamma}\big) }2^{\ell\varepsilon}\|f\|_{2}.
\end{align}

We can now use Bourgain's trick to interpolate between the above estimate
and the case $q=\infty$ of
\eqref{eqn:maxantidiagest}, with $\vartheta=\tfrac{4(n+\gamma)}{2n^2+3n+2\gamma}\in (0,1)$, $a=\frac{n(2n-3)+2\gamma(n-1)}{2n^2+3n+2\gamma}$ and small $\eps>0$. Observe that $a>0$ if $n\ge 2$.
Note that
\[
(1-\vartheta)(1,0,2,-1)+\vartheta(\tfrac{1}{2},\tfrac1{q_5},1-\tfrac{2n+1}{q_5},\tfrac{n(1-\gamma)}{2(n+\gamma)}+\varepsilon)
=(\tfrac{1}{p_4},\tfrac{1}{q_4},0,-a+\vartheta\varepsilon)\]
which implies the desired estimate \eqref{eqn:Q4est}.
\end{proof}

\subsection*{Outline of the proof of Proposition \ref{prop:q5}}
We can use a partial scaled Fourier transform
\[F_k(y',\theta_{2n},\bar\theta)= \int_{\bbR^{2}} f(y',y_{2n}, \bar y) e^{-i2^k(y_{2n}\theta_{2n} + \bar y\bar \theta)} dy_{2n} d\bar y \]
to write
\[\cA^{k,\ell} f(x,t)=2^{2k}\int
e^{i2^k(\theta_{2n}S^{2n}(x,t,y') +\bar \theta \bar S (x,t,y'))} a_\ell(x,t,y',\theta) F_k(y',\theta_{2n},\bar\theta) dy'd\theta.
\]
By Plancherel's theorem
\Be\label{Plancherel} \|F_k\|_2=2^{-k}2\pi\|f\|_2.
\Ee
Note that $a_\ell$ is supported on a set where $|y'|$ is small and $|\theta|\approx 1$. We make a finite decomposition of the symbol
$a_{\ell} =\sum_i a_{\ell,i}$ where each $a_{\ell,i} $ is supported on a set of diameter $O(\epsilon)$.
It will be convenient to rename the variables $(y',\theta)=(w',w_{2n} , \bar w)$ and replace $F_k(y',\theta_{2n}, \bar \theta)$ by a general function $w\to f(w)$. We are therefore led to consider
the oscillatory integral operator $T^{k,\ell}$ defined by
\[T^{k,\ell} f(x,t) := \int_{\R^d} e^{i 2^k \Phi(x,t,w)} b_{\ell}(x,t,w) f(w) dw,\]
with the phase function
\Be\label{eq:Phi-phase} \Phi(x,t,w)=
w_{2n}S^{2n}(x,t,w') +\bar w \bar S (x,t,w'),
\end{equation}
and symbol $b_\ell$
which is a placeholder for one of the $a_{\ell,i}$. Thus we have \[b_\ell(x,t,w)=\chi_1(x,t,w')\zeta_1(2\bar w) u_\ell(x,w_{2n}, \bar w) \] with $u_\ell$ as in \eqref{uell}. $b_{\ell}$ is smooth and supported in a set of diameter $O(\epsilon)$ where $|w'|\lc\epsilon$, $|x'|\lc\epsilon$, $|x_{2n}-t|\lc \epsilon$, $|\ox |\lc \epsilon$, $|(w_{2n}, \bar w)|\sim 1$ and where $|t-t^\circ|\lc \ep$ for some fixed $t^\circ$, and finally the size of
\begin{equation}
\label{eqn: OIO sigma def}
\sigma (x,w_{2n}, \bar w)=w_{2n}+\bar w\, x^\intercal Je_{2n}
\end{equation}
(i.e. $\sigma$ as in \eqref{sigmadefinition}) is about $2^{-\ell}$.

In view of \eqref{Plancherel} we see that \eqref{eqn:Q5single} follows from
\Be \label{eqn:Q5OIOsingle1}
\| T^{k,\ell} \|_{L^2(\bbR^{2n+1})\to L^{q_5, \infty}(\R^{2n+1}\times \cZ_{k-\ell})} \lc
2^{-k\frac{2n+1}{q_5} } 2^{\ell b_1}.
\Ee
We remark that for $2^\ell\lc \ep^{-1}$ the estimate follows by the consideration in \cite{RoosSeegerSrivastava}, indeed then we can apply a theorem about oscillatory integrals with Carleson-Sj\"olin conditions
(see \cite{SteinBeijing}, \cite{MSS92}). However in view of the properties of the amplitude function $b_\ell$ for large $\ell$ these theorems are no longer directly applicable. In what follows we shall only treat the case for large $\ell$.

In order to show \eqref{eqn:Q5OIOsingle1} it will be convenient to work with a subset of $\cZ_{k-\ell}$ with some additional separation condition. Given small $\nu$
such that \Be\label{eq:nu} 0< \nu <\tfrac 12 \big (b_1- \tfrac{n(1-\gamma)}{2(n+\gamma)} \big)
\Ee we replace $\cZ_{k-\ell}$ with an arbitrary subset $\Zsep$ satisfying the separation condition
\Be\label{Zsep-cond}
t, \breve t\in \Zsep,\, t\neq \breve t \implies |t-\breve t|>2^{\ell-k} 2^{\ell\nu} .
\Ee
It is clear that $\cZ_{k-\ell}$ can be written as a disjoint family of sets $\cZ_{k-\ell, i}$, for $i=1,\dots, N$ with $N\le 2^{1+\ell \nu }$, where each $\cZ_{k-\ell,i}$ satisfies the condition \eqref{Zsep-cond}. By Minkowski's inequality it is therefore enough to prove
\Be \label{eqn:Q5OIOsingle}
\| T^{k,\ell} \|_{L^2(\bbR^{2n+1})\to L^{q_5, \infty}(\R^{2n+1}\times \Zsep)} \lc
2^{-k\frac{2n+1}{q_5} }2^{\ell \frac{n(1-\ga) +\nu \gamma}{2 (n+\gamma)} }
\Ee
for any subset $\Zsep$ of $\cZ_{k-\ell}$ satisfying \eqref{Zsep-cond}.
In what follows we fix such a subset $\Zsep$. We define the operator $S^{k,\ell}$ acting on functions $g:\R^{2n+1}\times \Zsep\to \C$ by
\[ S^{k,\ell}g(x,t) = \sum_{ t'\in\Zsep} T^{k,\ell}_{t} (T^{k,\ell}_{t'} )^* [g(\cdot, t')](x) ,\]
where $T^{k,\ell}_t f(x) = T^{k,\ell} f(x,t)$.
By a $TT^*$ argument, \eqref{eqn:Q5OIOsingle} is a consequence of the following estimate
\begin{equation}
\label{eqn:skl}
\| S^{k,\ell} g \|_{L^{q_5, \infty}(\bbR^{2n+1}\times \Zsep)} \lc
2^{-2k\frac{2n+1}{q_5} }2^{\ell\frac{n(1-\gamma) +\nu\gamma}{n+\gamma}} \| g \|_{L^{q_5', 1}(\bbR^{2n+1}\times \Zsep)}.
\end{equation}
For $j>0 $ and $t\in\Zsep$, we define
\[ \Zsepj(t) = \{ t'\in \Zsep\,:\,2^{(1+\nu)\ell-k+j}\le |t-{t'}|\le 2^{(1+\nu)\ell-k+j+1} \}, \] and for
$j=0$ we set $\cZ_{k-\ell}^0(t)=\{t\}.$
Note that $\Zsepj(t)$ is empty, if $j>k-\ell+4$. Let
\[ S^{k,\ell}_j g(x,t) = \sum_{t'\in\Zsepj(t)} T^{k,\ell}_t (T^{k,\ell}_{t'})^* [g(\cdot, t')](x) \] and observe that \[S^{k,\ell}=\sum_{j\ge 0} S^{k,\ell}_j.\] We claim that $S^{k,\ell}_j$ satisfies for $2\le q\le \infty $ the estimates
\begin{multline}
\label{eqn: skl Lq'Lq}
\|S^{k,\ell}_jg \|_{L^q(\mathbb{R}^{2n+1}\times\Zsep) }
\\\lesssim 2^{-k\frac{4n+2}{q}}
2^{\ell((n-\nu+1)
\frac 2q-(n-\nu))} 2^{j(\frac{2(n+\gamma)}q -n)} \|g\|_{L^{q'}(\mathbb{R}^{2n+1}\times\Zsep)},
\end{multline}
which follow by interpolation from
\begin{equation}
\label{eqn: sklj L2}
\|S^{k,\ell}_jg\|_{L^2(\mathbb{R}^{2n+1}\times\Zsep) }\lesssim 2^{-k(2n+1)}2^{\ell}2^{j\gamma}\|g\|_{L^2(\mathbb{R}^{2n+1}\times
\Zsep)}
\end{equation}
and
\begin{equation}
\label{eqn: skl L1Linf}
\|S^{k,\ell}_jg \|_{L^\infty(\mathbb{R}^{2n+1}\times\Zsep) }\lesssim 2^{-\ell(n-\nu)} 2^{-jn} \|g\|_{L^1(\mathbb{R}^{2n+1}\times\Zsep)}.
\end{equation}

Clearly, if $q>q_5=\frac{2(n+\gamma)}{n} $ we can sum in $j$ in \eqref{eqn: skl Lq'Lq} to get
\begin{multline}
\label{eqn: skl Lq'Lqsum}
\|S^{k,\ell}g \|_{L^q(\mathbb{R}^{2n+1}\times\Zsep) }
\\\lesssim_q 2^{-k\frac{4n+2}{q}}
2^{\ell((n-\nu+1)
\frac 2q-(n-\nu))} \|g\|_{L^{q'}(\mathbb{R}^{2n+1}\times\Zsep)}, \quad q>q_5.
\end{multline}
Moreover for $q=q_5$ we can apply Bourgain's interpolation trick to obtain the restricted weak type inequality
\eqref{eqn:skl}.

To prove \eqref{eqn: sklj L2} we estimate
\begin{align*}
&\|S^{k,\ell}_jg\|_{L^2(\mathbb{R}^{2n+1}\times\Zsep)}
\\&\quad=\Big(\sum_{t\in \Zsep}\int\Big|\sum_{t'\in\Zsepj(t)} T^{k,\ell}_{t} (T^{k,\ell}_{t'})^* [g(\cdot, t')](x) \Big|^2\,dx\Big)^{1/2}\\
&\quad\leq \Big(\sum_{t\in \Zsep}\# \Zsepj(t) \int\sum_{t'\in\Zsepj(t)} |T^{k,\ell}_{t} (T^{k,\ell}_{t'})^* [g(\cdot, t')](x)|^2\,dx\Big)^{1/2}\\
&\quad\lc \Big(\sum_{t\in \Zsep }
\#\Zsepj(t) \sum_{t'\in \cZ^j_{k-\ell} (t)}
\|T^{k,\ell}_t\|^2_{L^2\to L^2}
\|(T^{k,\ell}_{t'})^*\|^2_{L^2\to L^2} \|g(\cdot, t') \|_2^2\Big)^{1/2}
\\
&\quad \lesssim A_2 2^{-k(2n+1)}2^{\ell}2^{j\gamma}\|g\|_{L^2(\mathbb{R}^{2n+1}\times\Zsep)}.
\end{align*}
Here we have used the fact that \[\|T^{k,\ell}_{t}\|_{L^2\to L^2}\lesssim 2^{-k}\|\mathcal{A}^{k,\ell}_{t}\|_{_{L^2\to L^2}}\lc 2^{ \frac \ell 2} 2^{-k (n+\frac 12) }\] and that $\#\Zsepj(t')\lesssim A_2 2^{j\gamma}$ for all $t'\in\Zsepj$. This takes care of \eqref{eqn: sklj L2}.

Inequality \eqref{eqn: skl L1Linf} is a direct consequence of the following kernel estimate, which shall be proved in \S\ref{sec: kernelest}.
\begin{prop}\label{prop:kernelest}
Let $k>0$ and $0\le \ell\le [k/3]$.
Let $\mathcal{K}^{k,\ell}_{t,\breve{t}}$ denote the kernel of $T^{k,\ell}_t (T^{k,\ell}_{\breve{t}})^*$, which is given by
\Be\label{Kkell} \mathcal{K}^{k,\ell}_{t,\breve{t}}(x,\breve{x}) = \int_{\R^{2n+1}} e^{i 2^k (\Phi(x,t,w)-\Phi(\breve{x},\breve{t},w) )} b_\ell(x,t,w) \overline{b_\ell(\breve{x},\breve{t},w)} \, dw. \Ee
Let $\nu>0$ and \Be\label{eq:ell-restr} 2^{\ell-k}2^{\nu \ell}\le |t-\breve{t}|\le 1.\Ee Then
for $0\leq \ell\le [\tfrac{k}{3}]$, we have
\[ |\mathcal{K}^{k,\ell}_{t,\breve{t}}(x,\breve{x})| \lesssim_\nu 2^{\ell\nu} (1+2^k|t-\breve{t}|)^{-n}. \]

\end{prop}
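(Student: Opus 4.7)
The plan is to prove Proposition \ref{prop:kernelest} by a stationary-phase and integration-by-parts analysis of the oscillatory integral defining $\mathcal{K}^{k,\ell}_{t,\breve t}(x,\breve x)$, treating $w\in\R^{2n+1}$ as the integration variable and $2^k$ as the large parameter, with phase $\Psi(w)=\Phi(x,t,w)-\Phi(\breve x,\breve t,w)$. The target decay $(2^k|t-\breve t|)^{-n}$ reflects non-degenerate oscillation in $2n$ of the $2n+1$ directions; the remaining direction is the fold direction, which is already localised to size $2^{-\ell}$ by the cut-off $u_\ell$.

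First, I note that the product amplitude $b_\ell(x,t,\cdot)\,\overline{b_\ell(\breve x,\breve t,\cdot)}$ forces $|\sigma(x,w_{2n},\bar w)|\approx|\sigma(\breve x,w_{2n},\bar w)|\approx 2^{-\ell}$. Since these two quantities differ by $\bar w(\underline x-\underline{\breve x})^\intercal Je_{2n}$ and $|\bar w|\approx 1$ on the support, the kernel vanishes unless $|\underline x-\underline{\breve x}|\lesssim 2^{-\ell}$, which I henceforth assume. Next, I change variables $w_{2n}\mapsto\tau:=\sigma(x,w_{2n},\bar w)=w_{2n}+\bar w\,\underline x^\intercal Je_{2n}$. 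The substitution is volume preserving, restricts $|\tau|\approx 2^{-\ell}$, and brings the phase into the form
\[
\Phi(x,t,w)=\tau\bigl[x_{2n}-tg(\tfrac{x'-w'}{t})\bigr]+\bar w\bigl[\overline{\fx}(x)+\underline x^\intercal JP^\intercal w'\bigr],
\]
where $\overline{\fx}$ is as in Section \ref{sec:prelim}. An analogous formula holds for $\Phi(\breve x,\breve t,w)$, up to a correction of order $O(|\tau|\,|\underline x-\underline{\breve x}|)=O(2^{-2\ell})$ coming from the mismatched substitution; after multiplication by $2^k$ this produces a phase factor of size $O(2^{k-2\ell})=O(2^{k/3})$, which is absorbable into the amplitude thanks to $\ell\le\lfloor k/3\rfloor$.

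With the phase in this form, I examine the gradient of $\Psi$ in the directions $(w',\bar w)$. Using \eqref{eq:Psidefc} and the Taylor expansion $g(u)=1-|u|^2/2+O(|u|^4)$, one obtains
\[
\nabla_{w'}\Psi=\tau\bigl[\nabla g(\tfrac{x'-w'}{t})-\nabla g(\tfrac{\breve x'-w'}{\breve t})\bigr]+\bar w(\underline x-\underline{\breve x})^\intercal JP^\intercal,\qquad \partial_{\bar w}\Psi=B(x,w')-B(\breve x,w'),
\]
where $B(x,w')=\overline{\fx}(x)+\underline x^\intercal JP^\intercal w'$. Away from the fold locus $\{\nabla g(\tfrac{x'-w'}{t})=\nabla g(\tfrac{\breve x'-w'}{\breve t})\}$, the phase is non-stationary in suitable combinations of $(w',\bar w)$ with effective oscillation scale $2^k|t-\breve t|$ in $2n$ independent directions, so iterated integration by parts produces the expected $(2^k|t-\breve t|)^{-n}$ decay; the remaining integration in $\tau$ over an interval of length $\approx 2^{-\ell}$ contributes only a bounded factor.

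The main obstacle is the degeneracy of $\Psi$ on the fold locus, where the stationary-phase Hessian collapses and the integration-by-parts argument above breaks down. I would handle this by a further dyadic decomposition of the $w'$-integration based on the distance to the fold set; on each dyadic layer the hypothesis $|t-\breve t|\ge 2^{(1+\nu)\ell-k}$ still ensures enough oscillation to run integration by parts, down to an innermost scale on which the integral is estimated pointwise by the volume of the layer. There are $O(\ell)$ scales, and summing their contributions yields the factor $2^{\ell\nu}$ in the conclusion. The remaining restriction $\ell\le\lfloor k/3\rfloor$ is used to absorb the various error terms arising from the change of coordinates and from the higher-order Taylor expansions of the phase and the amplitude.
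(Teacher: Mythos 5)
Your overall strategy (oscillatory-integral estimates in $w$ with large parameter $2^k$, special treatment of the fold) points in the right direction, but several concrete gaps leave the proof incomplete. First, from $|\sigma(x,w)|\approx|\sigma(\breve x,w)|\approx 2^{-\ell}$ you may only conclude $|(\underline x-\underline{\breve x})^\intercal Je_{2n}|\lesssim 2^{-\ell}$ --- one linear functional of $\underline x-\underline{\breve x}$ is controlled, not $|\underline x-\underline{\breve x}|$ itself. Consequently the error from substituting $\tau=\sigma(x,w_{2n},\bar w)$ into $\Phi(\breve x,\breve t,w)$ equals $\bigl(\sigma(x,w)-\sigma(\breve x,w)\bigr)\,\breve S^{2n}=O(2^{-\ell})$, not $O(|\tau|\,|\underline x-\underline{\breve x}|)=O(2^{-2\ell})$; after multiplication by $2^k$ this is a phase contribution of size $2^{k-\ell}\ge 2^{2k/3}$, and in any case a factor $e^{iO(2^{k/3})}$ whose $w$-derivatives are of comparable size cannot be ``absorbed into the amplitude'' --- it must be carried in the phase. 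More fundamentally, the specific power $(2^k|t-\breve t|)^{-n}$ cannot be produced by integration by parts away from stationary points (non-stationarity gives arbitrarily fast decay where it holds and nothing where it fails); it must come from a stationary-phase estimate, which requires identifying the stationary set of the phase difference and proving non-degeneracy of its Hessian in $2n$ of the $w$-directions. You never verify any such Hessian condition. This is the heart of the matter and is what \S\ref{sec:curvature} is devoted to: writing $\Phi(x,t,w)-\Phi(\breve x,\breve t,w)=\lambda\Psi(X,\breve X,\vec u,w)$ with $\vec u=(X-\breve X)/|X-\breve X|$ and $\lambda=2^k|X-\breve X|$, showing that the curvature form of $\Sigma_{X}=\{\nabla_{X}\Phi\}$ has rank $2n$ with the $w'\times w'$ and $\underline w\times\underline w$ minors invertible (see \eqref{partialcurvature}--\eqref{restrpartialcurvature}), and splitting into the cases $\vec u$ near $\pm N^\circ$ (stationary phase) and $\vec u$ away from $\pm N^\circ$ (integration by parts, where the separation hypothesis \eqref{eq:ell-restr} is what beats the $2^{\ell}$-losses coming from the amplitude).

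Second, your accounting for the loss $2^{\ell\nu}$ is not justified: summing $O(\ell)$ dyadic layers around the fold set would produce at most a factor $O(\ell)$, and you give no mechanism that actually yields the stated power. In the paper's argument the factor arises from a dichotomy forced by the amplitude, whose $(w_{2n},\bar w)$-derivatives grow like $2^{\ell}$ per derivative: full stationary phase in all $2n$ variables $\underline w$ is legitimate only when $2^{\ell}\le\lambda^{1/(2(1+\nu))}$ (giving $\lambda^{-n}$ with no loss), while in the complementary regime one performs stationary phase only in the $w'$-variables (where the amplitude has uniformly bounded derivatives, since $\sigma$ is independent of $w'$), obtaining $2^{-\ell}\lambda^{-(2n-1)/2}$ from the $(2n-1)$-dimensional Hessian together with the measure $O(2^{-\ell})$ of the $(w_{2n},\bar w)$-support; this is $\le 2^{\ell\nu}\lambda^{-n}$ exactly when $2^{\ell}\ge\lambda^{1/(2(1+\nu))}$. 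Your change of variables $w_{2n}\mapsto\tau$ is a reasonable device for taming the $\bar w$-derivatives of the cutoff, but without the Hessian computation and without this dichotomy the bound $2^{\ell\nu}(1+2^k|t-\breve t|)^{-n}$ does not follow.
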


\begin{rem}
One can run the above arguments also for $n=1$. A favorable $L^2\to L^q$ bound for $\cA_{k,\ell}$ follows if
$q>2(1+\gamma)$ because then the $j$-sum of the terms in
\eqref{eqn: skl Lq'Lq} converges for the case $n=1$ of \eqref{eqn: skl Lq'Lqsum}. The exponent of $2^\ell$ in \eqref{eqn: skl Lq'Lqsum} is now positive for all $\nu>0$ when $q<4$, and we have to allow the range $\ell\le k/3$. Thus we get a positive result when
$-\tfrac 6q + \tfrac 13(\tfrac 4q-1) < -2$ which is the case for $q<14/5$. This restricts the range of allowable $\gamma$ to $2(1+\gamma)<14/5$, i.e. $\gamma<2/5$. As a result one obtains that $M_E$ maps $L^2(\bbH^1)$ to $L^q(\bbH^1)$ if $\dim_{\mathrm {qA}}E<2/5$ and $q<14/5$. We know from considerations in \cite{GreenleafSeeger1994, RoosSeegerSrivastava} that this result is not sharp; this point will be addressed elsewhere.
\end{rem}

\section{Proof of Proposition \ref{prop:kernelest}}
\label{sec: kernelest}
In order to estimate the oscillatory integral \eqref{Kkell} using stationary phase arguments we expand the phase $\Phi(x,t,w)-\Phi(\breve x,\breve t,w) $ as
\[
(x-\breve x)^\intercal \nabla_x\Phi(\breve x,\breve t,w) +(t-\breve t)\partial_t\Phi(\breve x,\breve t,w)+ O(|(x-\breve x, t-\breve t)|^2)\] and thus, for stationary phase calculations it is natural to consider the curvature property of the surface
\[\Sigma_{x,t}=\{ \nabla_{x,t} \Phi (x,t,w) \} \]
where $w$ is close to a reference point $w^\circ$ with $(w')^\circ=0$.
These considerations are similar to those in the proof of Stein's result on Carleson-Sj\"olin type oscillatory integral operators (see \cite{SteinBeijing, Stein-harmonic} and also \cite{MSS93}). A potential difficulty here is that for large
$\ell$ and small $|x-\breve x|+|t-\breve t|$ the amplitudes do not a priori seem to satisfy the appropriate derivative bounds for an application of the stationary phase method.
However, a closer examination of the curvature properties of $\Sigma_{x,t}$ and their interplay with the geometry of the fold surface $\{\sigma=0\}$ will reveal that this is not a significant obstacle in our specific situation.

\subsection{\texorpdfstring{Curvature of $\Sigma_{x,t} $}{Curvature}}\label{sec:curvature}
We analyze the $w$-derivatives of
\Be \label{eq:Xidef}
\Xi(x,t,w):=\nabla_{x,t} \Phi (x,t,w)=
w_{2n} \nabla_{x,t}S^{2n} (x,t,w') + \barw \nabla_{x,t}\bar{S} (x,t,w'),
\Ee
for a fixed $(x,t)$. These calculation will be the basis for a stationary phase estimate in \S \ref{sec:contproof}. We will only consider the case of large $\ell$ i.e. when \Be\label{eq:sigma}\sigma\equiv\sigma(x,w_{2n},\ow)= w_{2n} +\ow\ux^\intercal Je_{2n}
\Ee is small ($|\sigma| \lc 2^{-\ell} $) since the other cases have already been discussed in \cite{RoosSeegerSrivastava}. We need some modifications because of the lack of good differentiability properties of the amplitudes for large $\ell$.

For the sake of completeness, we include the calculation of the curvature matrix below, and then establish the invertibility of this minor. Using \eqref{eq:Xidef}, the expressions for $S^{2n}, \bar{S}$, and the skew-symmetry of $J$ we calculate that $\Xi(x,t,w)$ is equal to
\[
w_{2n} \begin{pmatrix}
-\nabla g(\ttfw) \\1\\ 0\\ g_*(\ttfw)
\end{pmatrix}
\,+\,\bar{w}\begin{pmatrix}
PJP^\intercal w'
-tg(\ttfw) PJe_{2n}
-(\ux^{\intercal} J e_{2n}) \nabla g(\ttfw)
\\
e_{2n}^\intercal J P^\intercal w'
\\1
\\
g_*(\ttfw)\ux^{\intercal}Je_{2n}
\end{pmatrix}
\]
where
\begin{subequations}\label{hdef}
\Be \label{eq:defofh} g_*(x')=\inn{x'}{\nabla g(x')}-g(x'),\Ee
with
\Be\label{hderiv} g_*(0)=-1, \quad \nabla g_*(0)=0, \quad g_*''(0)= -I_{2n-1}.\Ee
\end{subequations}

The oscillatory integral operator $f\mapsto T^k f(\cdot,t):=\sum_{\ell}T^{k,\ell} f(\cdot,t)$ is an operator with a folding canonical relation (i.e. two-sided fold singularities), and the fold surface is parametrized by $\sigma=0$ (see \cite[Remark 3.2]{RoosSeegerSrivastava}, \cite{MuellerSeeger2004} and the discussion after \eqref{Littlewood-Paley} in the analogous setting of Fourier integral operators, for more details).

We compute, for $j=1,\dots, 2n-1$, the partial derivatives
(recalling the expression for $\sigma$ from \eqref{eq:sigma}),
\[
\Xi_{w_j}=\begin{pmatrix}
t^{-1}\sigma\partial_j\nabla g(\ttfw)+\ow PJ( e_j+\partial_jg(\ttfw) e_{2n}) \\ \ow e_{2n}^\intercal J e_j
\\
0
\\ -t^{-1}\sigma\partial_jg_*(\ttfw)
\end{pmatrix},\]
\[
\Xi_{w_{2n}}=\begin{pmatrix}
-\nabla g(\ttfw)\\1\\0\\g_*(\ttfw)
\end{pmatrix},\] and, with $\barw\equiv w_{2n+1}$,
\[\Xi_{{w}_{2n+1}}=
\begin{pmatrix}
PJP^\intercal w'
-tg(\ttfw) PJe_{2n}
-\ux^{\intercal} J e_{2n} \nabla g(\ttfw)
\\ e_{2n}^\intercal J P^\intercal w'
\\1
\\
g_*(\ttfw)\ux^{\intercal}Je_{2n}
\end{pmatrix}.
\]

For $x'=w'$,
using the properties of $g,h$ in
\eqref{eq:Psidefc}, \eqref{hderiv} we get
\begin{align*}
\Xi_{w_j}
\Big|_{x'=w'}
&=(-t^{-1}\sigma +\ow J)e_j,\\
\Xi_{w_{2n}} \Big|_{x'=w'}&=\begin{pmatrix}
\vec{0}_{2n-1}\\1\\0\\-1
\end{pmatrix},\,\, \Xi_{{w}_{2n+1}} \Big|_{x'=w'}=\begin{pmatrix}
PJP^\intercal w' -t PJe_{2n}\\
e_{2n}^\intercal J P^\intercal w'\\1\\ -\ux^\intercal Je_{2n}
\end{pmatrix}.
\end{align*}

Using the defining equations of a unit normal vector $N$, \[\inn {N}{ \Xi_{w_i} }=0 ,\,\, i=1,\dots, 2n+1 \]
at the north pole ($x'=w'$), we get

\begin{subequations}
\begin{align} \label{normal1}
0&=\inn {N}{\Xi_{w_j} }\Big|_{x'=w'}=-t^{-1}\sigma
\alpha_j+\ow\underline{\alpha}^{\intercal}Je_j, \quad j\le 2n-1.
\\
\label{normal2} 0&=\inn{N} {\Xi_{w_{2n} }}\Big|_{x'=w'}=\alpha_{2n}-\alpha_{2n+2},
\end{align} and
\begin{multline}
\label{normal3}
0=\inn {N} {\Xi_{w_{2n+1}} }\Big|_{x'=w'}=\\ \,
\alpha'^{\intercal}(PJP^\intercal w' -t PJ e_{2n})
+\alpha_{2n} e_{2n}^\intercal J P^\intercal w' +\alpha_{2n+1}
-\alpha_{2n+2}x^\intercal Je_{2n},
\end{multline}
\end{subequations}
where $N^\intercal=(\alpha'^\intercal,\alpha_{2n},\bar \alpha)$.
Equation \eqref{normal3} above expresses ${\alpha}_{2n+1}$ in terms of $\underline{\alpha}$ and $\alpha_{2n+2}$ and turns out to be not really relevant to our calculations. Since $|N|=1$ we have $|\underline\alpha|\approx 1$.

The second derivative vectors are given by
\[
\Xi_{w_jw_k}=\begin{pmatrix}
-t^{-2}\sigma\partial_{jk}\nabla g(\ttfw)-\ow t^{-1}PJe_{2n}\partial_{jk}^2g(\ttfw) \\0\\0\\ t^{-2}\sigma\partial_{jk}^2g_*(\ttfw)
\end{pmatrix},\] for $1\le j,k\le 2n-1$,
and
\[ \Xi_{w_{j}w_{k}} = 0, \text{ if } 2n\le j, k\le 2n+1.
\]
Moreover, for $j=1,\dots, 2n-1$,
\[
\Xi_{w_jw_{2n}}=\begin{pmatrix}
t^{-1}\partial_j\nabla g(\ttfw)\\0\\0\\-t^{-1}\partial_jg_*(\ttfw)
\end{pmatrix},\] and,
\[\Xi_{w_j{w}_{2n+1}}=\begin{pmatrix}
PJe_j+ PJe_{2n}\partial_jg(\ttfw)+t^{-1}\ux^{\intercal} Je_{2n}\partial_j\nabla g(\ttfw) \\ e_{2n}^{\intercal}J e_j\\0\\
- t^{-1}\ux^{\intercal}Je_{2n}\partial_j g_*(\ttfw)
\end{pmatrix}.
\]
\detail{
For $x'=w'$ we get (using
$g''(0)=g_*''(0)=-I_{2n-1}$, $g'''(0)=0$)
for $1\le j,k\le 2n-1$,
\begin{align*}
\Xi_{y_jy_j} \Big|_{x'=y'}&=\begin{pmatrix}
t^{-1}PJ_{\overline{y}}e_{2n}\ \\0\\\vec{0}_m\\ -t^{-2}
\sigma- t^{-1}\La_{\oy} e_{2n}
\end{pmatrix},\\ \Xi_{y_j y_k}\Big|_{x'=y'} &=0 \text{ if } j\neq k, \qquad
\Xi_{y_jy_{2n}} \Big|_{x'=y'}=
-t^{-1}e_j,\\
\Xi_{y_j{y}_{2n+i}} \Big|_{x'=y'}&=\begin{pmatrix}
J_ie_j \\\vec{0}_{m}\\-\La_ie_j
\end{pmatrix}-t^{-1}((\ux^{\intercal} J_i -t\La_i)e_{2n})e_j
\end{align*}
and
\[ \Xi_{y_{2n}y_{2n}} = \Xi_{\overline{y}_i\overline{y}_i'}=\Xi_{y_{2n}\overline{y}_i} =\vec 0 \text{ when $x'=y'$.}\]
}
We evaluate at $x'=w'$, using $g''(0)=g_*''(0)=-I_{2n-1}$, $g'''(0)=0$, and see that the components
of the curvature matrix $\mathscr C^N$ at $x'=w'$
are given by
\begin{align*}
\inn{N}{\Xi_{w_jw_j}}\Big|_{x'=w'}&=t^{-1}(\alpha')^\intercal PJe_{2n}\ow -t^{-2}\alpha_{2n+2}\sigma,\\
\inn{N}{\Xi_{w_jw_k}}\Big|_{x'=w'}&=0,\quad \text {if } j\neq k,
\end{align*}
for $1\le j,k\le 2n-1$. Moreover for $1\le j\le 2n-1$,
\begin{align*}
\inn{N}{\Xi_{w_jw_{2n}}}\Big|_{x'=w'}&=-t^{-1}\alpha_j,\\
\inn{N}{\Xi_{w_j{w}_{2n+1}}}\Big|_{x'=w'}&=\ubar{\alpha}^{\intercal} J e_j- t^{-1}\alpha_j\ux^{\intercal}Je_{2n},
\end{align*}
and
\[ \inn{N}{\Xi_{w_{j}w_{k}}}\Big|_{x'=w'}= 0, \quad j,k\in \{2n, 2n+1\}.\]
Thus, the curvature matrix $\mathscr C^N$ at $x'=w'$ with entries
$\inn {N}{\Xi_{w_iw_j} }$, $1\le i,j\le 2n+1$ (with $w_{2n+1}\equiv \ow$) is
\[ \mathscr C^N =\begin{pmatrix}
c\mathrm{I}_{2n-1} &PA\\A^\intercal P^\intercal &0
\end{pmatrix} \Big|_{x'=w'},
\]
where the scalar $c$ and the $2n\times 2$ matrix $A$ are given by
\begin{align}
\label{eqn: cdef}
c&=\tfrac{\underline\alpha^\intercal Je_{2n}}{t} \ow - \tfrac{ \alpha_{2n+2}\sigma}{t^2}
\\
A&=
\begin{pmatrix} -\frac 1t \underline\alpha & J\underline\alpha-\frac{ \underline x^\intercal Je_{2n} }{t} \underline\alpha
\end{pmatrix}\nonumber
\end{align}
(and $PA$ is the $(2n-1)\times 2$ matrix obtained by deleting the last row of $A$). Using \cite[Lemma 3.1]{RoosSeegerSrivastava} and the fact that $|\underline \alpha|\approx 1$, it can be checked that $|c|$ is uniformly bounded away from zero, which implies that the rank of the curvature matrix is $2n$ (indeed by \eqref{normal1} we have $PJ\underline{\alpha}=0$ when $x'=w'$ and $\sigma=0$, hence $\rank(PA)=1$).

As a consequence of the above we obtain for the restricted matrices
\begin{align} \label{partialcurvature} \det \begin{pmatrix} D^2_{ \uw\,\uw} \inn{\Xi}{N}\end{pmatrix}\Big|_{x'=w'} &=- c^{2n-2}\frac{|\alpha'|^2}{t^2} \neq 0
\\
\label{restrpartialcurvature} \det \begin{pmatrix} D^2_{w'w'}\inn {\Xi}{N}\end{pmatrix}\Big|_{x'=w'} &=c^{2n-1} \neq 0
\end{align}
with $c$ as in \eqref{eqn: cdef}.

\subsection{Proof of Proposition \ref{prop:kernelest}, continued}\label{sec:contproof}
Recall that
\[ \mathcal{K}^{k,\ell}_{t,\breve{t}}(x,\breve{x}) = \int e^{i 2^k (\Phi(x,t,w)-\Phi(\breve{x},\breve{t},w))} b_\ell(x,t,w) \overline{b_\ell(\breve{x},\breve{t},w)} \, dw. \]
For ease of notation, we set \[X=(x,t), \,\,\Breve{X}=(\Breve{x}, \Breve{t}),\]
and
\[B_\ell(X,\Breve{X}, w)=b_\ell(x,t,w) \overline{b_\ell(\breve{x},\breve{t},w)}.\]
Recall that the amplitude $B_\ell$ is supported in the set where \begin{equation}\label{eqn:amp supp}|(w_{2n}, \ow)|\sim 1,\,|w'|\leq \epsilon,\, |x'|\le\epsilon,\,|\Breve{x}'|\le\epsilon,\, |\ox |\le \epsilon, \, |\Bar{\Breve{x}} |\le \epsilon, \end{equation}
\[|x_{2n}-t|\le \epsilon,\,|\Breve{x}_{2n}-\Breve{t}|\le \epsilon, \, |t-\breve t| \le \epsilon\] and
\[
|w_{2n}+\ow\, \ux^\intercal Je_{2n}|\approx 2^{-\ell}\approx |w_{2n}+\ow\, \underline{\Breve{x}}^\intercal Je_{2n}|.
\]
We fix a reference point $(X^\circ, w^\circ)$ where
\[X^\circ=(0',x_{2n}^\circ, \bar x^\circ, t^\circ), \quad
w^\circ= (0', 0,\barw^\circ) \] (so that $\sigma$ becomes $0$ at $(X^\circ, w^\circ)$, and let $N^\circ $ be one of the unit normals to $\Sigma_{X^\circ} $ at $w=w^\circ$, i.e. we have
\begin{equation}
\label{eqn:N}
\langle N^\circ, \partial_{w_j}\nabla_X \Phi (X^\circ,w^\circ)\rangle=0,\,\, \text{ for } 1\leq j\leq 2n+1.
\end{equation}
Then $B_\ell$ is supported in a ball of radius $O(\ep)$ centered at $(X^\circ, \breve X^\circ, w^\circ)$.

For a unit vector $\vec u$ define
\[ \Psi(X,\breve X, \vec u, w) =
\int_0^1\vec u\cdot \nabla_X \Phi ( \breve X+s(X-\Breve{X}),w) \,ds.
\]
Then
we can express the phase function corresponding to the kernel $\cK_{t,\Breve{t}}^{k,\ell}$ as
\Be\label{eq:ftc}2^k(\Phi(X,y)-\Phi(\breve{X},y))=\lambda\,
\Psi(X,\breve X, \tfrac{X-\breve X}{|X-\breve X|}, w), \text{ with $\lambda= 2^k|X-\Breve{X}|.$}
\Ee

Define for all $\vec u\in S^{2n+1}$
\[ \cI_{\la,\ell} (X,\breve X, \vec u)= \int e^{i\la \Psi(X,\breve X, \vec u, w)} B_\ell(X,\breve X, w) dw
\]
and note that $\Psi$ is a smooth phase, in all arguments.
\begin{lem}\label{lem:oio} Let $\nu>0$. For $\ep$ in \eqref{eqn:amp supp} sufficiently small the following holds, for $2^\ell>\ep^{-1}$, $\la>\ep^{-1}$.

(i) For $\min \{|\vec u-N^\circ|, |\vec u+N^\circ|\} \ge \ep^{3/4}$ we have
\[|\cI_{\la,\ell} (X,\breve X,\vec u)|\le C_{M,\ep} 2^{-\ell}(\la 2^{-\ell})^{-M}.\]

(ii) For $\min \{|\vec u-N^\circ|, |\vec u+N^\circ|\}\le \ep^{1/2} $ and $2^\ell \le \la^{\frac{1}{2(1+\nu)}} $ we have
\[|\cI_{\la,\ell} (X,\breve X,\vec u)|\lc_\ep \la^{-n}.
\]

(iii)
For $\min \{|\vec u-N^\circ|, |\vec u+N^\circ|\} \le \ep^{1/2} $,
we have
\[|\cI_{\la,\ell} (X,\breve X,\vec u)|\lc_\ep 2^{-\ell} \la^{-\frac {2n-1}{2} }.
\]
If in particular $2^\ell \ge \la^{\frac{1}{2(1+\nu)}} $ then
\[|\cI_{\la,\ell} (X,\breve X,\vec u)|\lc_\ep 2^{\ell \nu} \la^{-n}.
\]
\end{lem}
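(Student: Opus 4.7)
The plan is to isolate three different mechanisms of decay for $\cI_{\la,\ell}$, one for each regime in the statement, exploiting the mismatch in smoothness of $B_\ell$: since $\chi_1$ is smooth in $w'$ and $u_\ell$ depends on $(w_{2n},\bar w)$ only through $\sigma$, derivatives $\partial_{w'}^\alpha B_\ell$ are $O(1)$ while each derivative in $w_{2n}$ or $\bar w$ costs up to a factor of $2^\ell$. Moreover, the $w$-support of $B_\ell$ has volume $O(2^{-\ell})$, coming from a slab of thickness $2^{-\ell}$ in the direction of $\sigma$.

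For (i) I would first verify that $|\nabla_w\Psi|\gtrsim \ep^{3/4}$ throughout the support. The matrix $\Xi_w(X^\circ,w^\circ)$ is $(2n+2)\times(2n+1)$ with image of codimension one and unit normal $\pm N^\circ$; hence any $\vec u$ at distance $\ge \ep^{3/4}$ from $\pm N^\circ$ satisfies $|\vec u^\intercal\Xi_w(X^\circ,w^\circ)|\gtrsim \ep^{3/4}$, and the $O(\ep)$ perturbation as $(X,\breve X,w)$ moves across the support preserves this bound (since $\ep\ll\ep^{3/4}$). Iterating integration by parts $M$ times with the operator $L^*=(i\la)^{-1}\nabla_w\!\cdot\!\bigl(|\nabla_w\Psi|^{-2}\nabla_w\Psi\,\cdot\bigr)$ and noting that each $w$-derivative of $B_\ell$ costs at most a factor $2^\ell$ produces $|\cI_{\la,\ell}|\lesssim_{M,\ep}2^{-\ell}(\la 2^{-\ell})^{-M}$.

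For (iii) I would perform stationary phase in $w'=(w_1,\dots,w_{2n-1})$ alone, the direction in which $B_\ell$ is uniformly $C^\infty$. By \eqref{restrpartialcurvature} the Hessian $D^2_{w'w'}\Psi$ at the reference equals $cI_{2n-1}$ with $|c|$ bounded below, and $\nabla_{w'}\Psi$ vanishes there, so the implicit function theorem supplies a nondegenerate critical point $w'^*$ in the support. Standard stationary phase gives $\bigl|\int e^{i\la\Psi}B_\ell\,dw'\bigr|\lesssim_\ep\la^{-(2n-1)/2}$, and the remaining $(w_{2n},\bar w)$-integration runs over the $\sigma$-slab of two-dimensional measure $O(2^{-\ell})$, yielding the first bound. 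The second bound of (iii) then follows algebraically from $2^\ell\ge\la^{1/(2(1+\nu))}\Leftrightarrow 2^{-\ell}\le 2^{\ell\nu}\la^{-1/2}$. For (ii) I would instead enlarge the stationary phase variable to $\uw=(w',w_{2n})\in\R^{2n}$, using \eqref{partialcurvature}: $\det D^2_{\uw\uw}\Psi\big|_{\mathrm{ref}}=-c^{2n-2}|\alpha'|^2/t^2$ is bounded away from zero, since $|\alpha'|\gtrsim 1$ follows from \eqref{normal1} at $\sigma=0$ forcing $\underline\alpha\propto J^{-\intercal}e_{2n}$, whose first $2n-1$ components cannot all vanish by invertibility of $J$. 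The quantitative stationary phase estimate then gives, uniformly in $\bar w$,
\[\Big|\int e^{i\la\Psi}B_\ell\,d\uw\Big|\lesssim \la^{-n}\sum_{|\alpha|\le N}\la^{-|\alpha|/2}\|D_{\uw}^{\alpha}B_\ell\|_\infty\lesssim \la^{-n}\sum_{|\alpha|\le N}(2^\ell\la^{-1/2})^{|\alpha|},\]
and the hypothesis $2^\ell\le\la^{1/(2(1+\nu))}$ makes $2^\ell\la^{-1/2}\le\la^{-\nu/(2(1+\nu))}<1$, so the geometric sum is $O(1)$; a final integration over $\bar w$ on the $O(1)$ support completes (ii).

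The main obstacle is that the curvature matrix $\mathscr C^{N^\circ}$ has rank only $2n$ rather than $2n+1$, its kernel lying in the $(w_{2n},\bar w)$-plane, precisely the plane in which $B_\ell$ fails to be uniformly $C^\infty$. One cannot simply apply nondegenerate stationary phase in all $2n+1$ variables and must trade between the number of stationary-phase variables (controlling $\la$-decay) and the number of $\sigma$-direction integrations (controlling $2^\ell$-losses). Part (iii) sacrifices one dimension of stationary phase in exchange for the $2^{-\ell}$ slab-measure; Part (ii) restores the extra $\la^{-1/2}$ by including $w_{2n}$ in the stationary-phase variable at the cost of controlling amplitude derivatives of size $2^\ell$, which forces the smallness assumption $2^\ell\le\la^{1/(2(1+\nu))}$. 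The threshold $2^\ell=\la^{1/(2(1+\nu))}$ is exactly the break-even point between these two mechanisms.
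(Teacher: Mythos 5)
Your proposal is correct and follows essentially the same route as the paper: non-stationary phase integration by parts with the $2^{\ell}$-per-derivative loss for (i), stationary phase in the $2n$ variables $\underline w=(w',w_{2n})$ using \eqref{partialcurvature} together with the amplitude bound $\partial_{\underline w}^{\alpha}B_\ell=O(\la^{|\alpha|/(2+2\nu)})$ for (ii), and stationary phase in $w'$ alone (where $B_\ell$ is uniformly smooth) combined with the $O(2^{-\ell})$ measure of the $(w_{2n},\bar w)$-support for (iii). Your closing discussion of the rank-$2n$ curvature matrix and the trade-off between the two mechanisms matches the paper's reasoning exactly.
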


\begin{remarka} The conclusions in part (ii), (iii) also hold for $\nu=0$ but in (ii) require a stationary phase estimate for amplitudes $\chi_\la$ satisfying endpoint Calder\'on-Vaillancourt bounds, i.e. $\partial_w^\alpha (\chi_\la (w)) =O(\la^{|\alpha|/2}) $. For our application it suffices to take $\nu>0$.
\end{remarka}

We first show that Lemma \ref{lem:oio} implies Proposition \ref{prop:kernelest}. We take $X\neq\breve X$ and $\vec u=\frac{X-\breve X}{|X-\breve X|}$, and $\la=2^k|X-\breve X|$. Assume $\min |\frac{X-\breve X}{|X-\breve X|} \pm N_0|\ge \ep^{3/4}$. We have $|t-\breve t|\ge 2^{\ell-k} 2^{\nu\ell}$ and get from part (i) of Lemma \ref{lem:oio} the estimate, for $N\gg n$,
\begin{align*}|\cI_{\la,\ell}|&\lc 2^{-\ell}(\la 2^{-\ell} )^{-N} \lc_N 2^{\ell(n-1)} (2^k |X-\breve X|)^{-n} (2^{k-\ell}|X-\breve X|)^{n-N})
\\&\lc (2^k|X-\breve X|)^{-n} 2^{\ell (n-1- \nu(N-n))}.
\end{align*} The bound $|\cI_{\la,\ell}|\lc (2^k |X-\breve X|)^{-n} $ follows if we choose $N$ large enough.

If $\min |\frac{X-\breve X}{|X-\breve X|} \pm N_0|\le \ep^{3/4}$ the appropriate bound is in part (ii) of the lemma, and the bound in Proposition \ref{prop:kernelest} is now established
for the range $2^\ell\le (2^k|X-\breve X|)^{\frac{1}{2(1+\nu)} } $, i.e. $|X-\breve X| \ge 2^{2\ell(1+\nu)-k}$.

Next assume $t\neq \breve t$,
$|X-\breve X| \le 2^{2\ell(1+\nu)-k}$, by the assumed $t$-variation we also have the lower bound
and $|X-\breve X|\ge 2^{\ell(1+ \nu)-k}$ which is needed to apply part (i) of Lemma \ref{lem:oio} for $\min |\frac{X-\breve X}{|X-\breve X|} \pm N_0|\ge \ep^{3/4}$. In the opposite range we apply part (iii) of the lemma. Note that the assumption
$2^\ell \ge \la^{\frac{1}{2(1+\nu)}} $ is now equivalent to the required $|X-\breve X|\ge 2^{2\ell(1+\nu)-k} $.
We also note that $\partial_{w'}^\alpha(B_\ell(X,\breve X,w))=O(1) $.

This finishes the proof of Proposition \ref{prop:kernelest} once Lemma \ref{lem:oio} is verified.

\subsection{Proof of Lemma \ref{lem:oio}}
Let $V$ be the linear space perpendicular to $N^\circ$; then $\nabla^2_{(X,w) }\Phi $ is invertible as a map from $\bbR^{2n+1}$ to $V$. Hence
\[ \big| \nabla_w \inn{\vec u}{\nabla_{x,t} \Phi(X^\circ,w)}_{w=w^\circ} \big|\ge c|\vec u-\inn{\vec u}{N^\circ} N^\circ | \gc \ep^{3/4}, \]
and by expanding $\nabla_w\Psi (X,\breve X,\vec u, w)$ about $(X^\circ, X^\circ, \vec u, w^\circ) $ we get
\[
\nabla_w \Psi(X,\breve X,\vec u, w) - \nabla_w \inn{\vec u}{\nabla_{x,t} \Phi(X^\circ,w)}\big|_{w=w^\circ} =O( \ep).
\]
This implies that for $|\vec u-N^\circ |\ge\ep^{3/4}$ and $\ep$ small
\[\big|
\nabla_w \Psi(X,\breve X,\vec u, w)\big| \gc\ep^{3/4}
\]
for $(X,\breve X,w)$ in the support of $B_\ell$. Since the higher $w$-derivatives of $\Psi$ are bounded and since
\begin{equation}
\label{eqn:blowup est-y}
\partial_{(w_{2n},\ow )} ^\alpha \big[ B_\ell(x,t,w)]=O(2^{\ell|\alpha|} )
\end{equation}
an integration by parts yields the bound $\cI_{\la,\ell}= O(2^{-\ell}(\la 2^{-\ell})^N)$ as asserted.

We now turn to (ii) and apply a stationary phase argument with respect to the
$\uw$-variables. By our curvature calculations
the $(2n\times 2n)$ Hessian matrix $D^2_{\uw\,\uw} \big(\inn{N^\circ}{\nabla_{X} \Phi (X^\circ, \uw,\barw)}\big)_{w=w^\circ} $ is invertible, for $|u-N^\circ| \le \ep^{1/2} $ we get a matrix norm estimate
\[\big\|
D^2_{\uw\,\uw} (\inn{N^\circ}{\nabla_{X} \Phi (X^\circ, w)})_{w=w^\circ} - D^2_{\uw\,\uw } \Psi(X,\vec u, w) \big\|\lc \ep^{1/4}
\] and hence (given that $\ep$ is small) we see that
$D^2_{\uw\,\uw } \Psi(X,\vec u, w)$ is invertible, with uniformly bounded inverse. Note that
by our assumption on $\ell$ and $\la$ we have $\partial^\alpha_{\uw} B_\ell(X,\breve X,w) = O( \la^{|\alpha|/(2+2\nu)})$ and so for $\nu>0$ a standard application of the stationary phase method in the $\uw$ variables gives the estimate $|\cI_{\la,\ell}|= O(\la^{-n})$.

For (iii) we argue similarly but in view of the unfavorable differentiability properties of $B_\ell$ with respect to $w_{2n}$ we are freezing both the $w_{2n}$ and $\barw$ variables.
We now have that the
$(2n-1)\times (2n-1)$ Hessian matrix $D^2_{w'w'} \big(\inn{N^\circ}{\nabla_{X} \Phi (X^\circ, w',w_{2n},\barw)}\big)_{w=w^\circ} $ is the identity matrix and by a perturbation argument as above we see that
$D^2_{w'w'} \Psi(X,\vec u, w)$ is invertible.
Since $\sigma$ does not depend on $w'$ we have uniform upper bounds for the $w'$-derivatives of the amplitude. We can therefore apply the method of stationary phase in the $w'$-variables and since the $(w_{2n}, \barw)$-integral is extended over a set of measure $O(2^{-\ell})$ we obtain the asserted estimate $|\cI_{\la,\ell}|=O(2^{-\ell} \la^{-\frac{2n-1}{2}})$.
The second estimate in (iii) is immediate since the inequality
$2^\ell \ge \la^{\frac{1}{2(1+\nu)}} $ is equivalent with
$2^{-\ell} \la^{-\frac{2n-1}{2}} \le 2^{\ell\nu} \la^{-n}$.
\qed

\section{Necessary Conditions}\label{sec:necessary}
In this section we prove the sharpness of Theorem \ref{thm:main} for Assouad regular sets $E$. Regarding the line connecting $Q_1$ and $Q_{2,\beta}$ this is just the necessary condition $p\leq q$ imposed by translation invariance and noncompactness of the group $\bbH^n$.
The necessary conditions for the segments $\overline{Q_{2,\beta}, Q_{3,\beta}} $ and $\overline{Q_1Q_{4,\gamma}}$ are quite similar to the consideration in the Euclidean case. However the example
for the segment $\overline{Q_{3,\beta}Q_{4,\gamma}}$ is substantially different from a Knapp type example for co-dimension two surfaces in the Euclidean case (see also \cite{RoosSeegerSrivastava} for a simplified version for the full maximal operator); this indicates a new phenomenon on the Heisenberg group.

Given $\delta\in(0,1)$, let $\cI_{\delta}(E)$ denote the set of all dyadic intervals of the form $[\nu\delta, (\nu+1)\delta)$ (with $\nu\in \mathbb{Z}$) which intersect $E$, and let $\cZ_\delta(E)$ denote a subset of $E$ which contains exactly one $t\in E\cap I$ for every $I\in \cI_\delta(E)$.
Let $\beta=\dim_{\rmM}E$, and $\gamma=\dim_{\mathrm{qA}}E$, respectively.

\subsection{\texorpdfstring{The line connecting $Q_{2,\beta}$ and $Q_{3,\beta}$}{The line connecting Q2 and Q3}} \label{sec:Q2Q3}
For any $\eps>0$ there exists a set $\Delta_{\eps}=\{\delta_j: j=1,2,\dots\}$ with $\lim_{j\to \infty} \delta_j=0$ such that
$N(E,\delta)\ge \delta^{-\beta+\eps}$ for $\delta\in \Delta_{\eps}$.
For $\delta\in \Delta_\eps$ let $f_{\delta}$ be the characteristic function of $B_{10\delta}$, the ball of radius $10\delta$ centered at the origin. Then \[\|f_{\delta}\|_p\approx \delta^{(2n+1)/p}.\]
For $1\leq t\leq 2$ we consider the sets
\[ R_{\delta,t}:= \{(\ubar{x}, \bar x) : ||\ubar x|-t|\le \delta/20,\,|\bar x|\le \delta/20\}.\]
Then $|R_{\delta,t} |\gc \delta^{2} $.
Let $\Sigma_{x,t}= \{\om \in S^{2n-1}: |\ubar x-t \om|\le \delta/4\} $ which has spherical measure $\approx \delta^{2n-1}$.

If $x\in R_{\delta,t}$ and $\om \in \Sigma_{x,t}$ then $|\ubar x-t\om |\le \delta$ and using the skew symmetry of $J$ we get
\[|\bar x- t \ubar x^\intercal J \om| \le
|\bar x|+|\ubar x^\intercal J(t \om - \ubar x)| \le 3\delta.\]
Thus, for $x\in R_{\delta,t}$, \[f_{\delta}*\mu_t(\ubar{x},\bar{x})=\int_{S^{2n-1}} f_{\delta}(\ubar{x}-t{\om},\bar{x}-t\ubar{x}^\intercal\! J{\om})\,d\mu({\om})\gtrsim \delta^{2n-1}.\]

Passing to the maximal operator, we set
\[R_\delta=\cup_{t\in \cZ_{\delta}(E)} R_{\delta,t}. \]
We have $| R_\delta|\gc\delta^2N(E,\delta)\gtrsim \delta^{2+\eps-\beta}$. Further, for $x\in R_{\delta}$, there exists a unique $t(x)\in \cZ_{\delta}(E)$ such that $|f_\delta*\mu_{t(x)}(x)|\ge \delta^{2n-1}$.

This yields the inequality
\[\delta^{2n-1}\delta^{(2+
\eps-\beta)/q}\lesssim \delta^{(2n+1)/p}.\]
We set $\delta=\delta_j$ and let $j\to \infty$, and since $\eps>0$ was arbitrary we
obtain the necessary condition
\begin{equation}
\label{ball ineq}
\tfrac{2-\beta}{q}+2n-1\geq \tfrac{2n+1}{p},
\end{equation}
that is, $(1/p,1/q)$ lies on or above the line connecting $Q_{2,\beta}$ and $Q_{3,\beta}$.

\subsection{\texorpdfstring{The line connecting $Q_1$ and $Q_{4,\gamma}$}{The line connecting Q1 and Q4}} \label{sec:Q1Q4} For this line we just use the counterexample for the individual averaging operators, bounding the maximal function from below by an averaging operator.
Given $t\in [1,2]$, let $g_{\delta,t}$ be the characteristic function of the set
$\{(\ubar{y},\bar{y}): | |\ubar{y}|-t|\le 10\delta, |\bar{y}|\le 10\delta\}$. Thus $\|g_{\delta,t}\|_p\lesssim \delta^{2/p}. $

Let $x=(\ubar{x},\bar{x})$ be such that $|\ubar x|\le \delta$ and $|\bar x| \le \delta$. For any $\om\in S^{2n-1}$, we have that
$t|\ubar{x}^\intercal J{\om}|\lesssim 2\delta$. Thus
\begin{gather*} \big||\ubar{x}-t\om|-t\big|\le 2 \delta,\\
\big|\bar x-t \ubar x^\intercal J\om\big| \le |\bar x|+t|\ux^\intercal J\om| \le 10 \delta \end{gather*}
implying that $|g_{\delta,t}*\sigma_t(x)|\gtrsim 1$.
This yields the inequality $\delta^{(2n+1)/q}\le \delta^{2/p} $ which leads to the necessary condition
\begin{equation}
\label{scaling ineq}
\tfrac {1}{q} \ge \tfrac{2}{2n+1}\cdot \tfrac 1p,
\end{equation} that is, $(1/p,1/q)$ lies on or above the line connecting $Q_1$ and $Q_{4,\gamma}$.

\subsection{\texorpdfstring{The line connecting $Q_{3,\beta}$ and $Q_{4,\gamma}$}{The line connecting Q3 and Q4}}
Here we assume $\beta>0$ (and therefore $\gamma>0$) since $Q_{3,0}=Q_{4,0}$.
By a change of variables, we can assume that \[J=\tfrac{1}{2}\begin{pmatrix}
0 & I_{n} \\
-I_{n} & 0
\end{pmatrix},\]
with $I_n$ being the $n \times n$ identity matrix.

Let $\eps>0$. By the definition of quasi-Assouad regularity there exists a sequence $\{\delta_j\}_{j=1}^\infty$ of positive numbers with $\lim_{j\to \infty}\delta_j=0$ and intervals $I_j\subset [1,2]$ of length $\delta_j^{\theta}$
with $\theta=1-\beta/\gamma$ such that
\Be\label{lower-Assouad}N(E\cap I_j, \delta_j)\ge (\delta_j/|I_j|)^{\eps-\gamma} = \delta_j^{(1-\theta)(\eps-\gamma)}.\Ee

We let $\cP_\eps$ denote the set of pairs $(\delta_j, I_j)$ and fix $(\delta, I)\in \cP_\eps$. Set
\Be\label{sigmajdef}\varsigma=\delta^{(1-\theta)/2}.\Ee
Let $a$ be the right end point of the interval $I$ and let $f$ be the characteristic function of the set
\[\{(\uz,\oz):|z_l'|\lesssim \varsigma, |z_r'|\lesssim \varsigma,||z_n|-a|\lesssim \delta,||z_{2n}|-a|\lesssim \delta,|\oz|\lesssim \delta^{1-\theta}\},\]
where $\uz=(z_l, z_r)\in \bbR^n\times \bbR^n$ and $z_l=(z_l',z_{n})\in \bbR^{n-1}\times \bbR$, $z_r=(z_r',z_{2n})\in \bbR^{n-1}\times \bbR$.
Then \Be\label{fjLp}\|f\|_{p}\lesssim (\varsigma^{2n-2}\delta^{3-\theta})^{1/p}\approx (\delta^{n(1-\theta)+2})^{1/p}.\Ee

For each $t\in [1,2]$, $t<a$ we define the set \[R^{t} _{\delta}:=\{(\ux,\ox):|x_l'|\lesssim \delta \varsigma^{-1}, |x_r'|\lesssim \delta \varsigma^{-1},|\ox|\lesssim \delta^{1+\theta},||(x_n,x_{2n})|+t-a|\lesssim \delta \}.\]
Clearly $\meas(R^t_\delta)\approx (\delta\varsigma^{-1})^{
2n-2}\delta^{2+\theta}$.
Note that there is a constant $C\ge 1$ such that $R^t_\delta$ and $R^{t'}_{\delta}$ are disjoint if $|t-t'|\ge C\delta$.
We choose a covering of $E\cap I$ by a collection $\mathcal{J}$ of pairwise disjoint intervals, each of length $\delta$ and intersecting $E\cap I$.
Let $\tilde \cJ=\{I_\nu\}_{\nu=1}^N$ be a maximal $2C\delta$-separated subset of intervals in $\cJ$. For each $I_\nu$ pick $t_\nu\in I_\nu\cap E$. Then
$R^{t_\nu}_\delta$ and $R^{t_{\nu'}}_{\delta}$ are disjoint if $\nu\neq \nu'$.
Also
\Be \label{cardcJ} N=\# \widetilde \cJ \gc N(E\cap I,\delta).\Ee

We now prove the lower bound
\Be\label{lowermaxbound}
M_E f(\ubar x,\bar x) \gc \delta^{n(1-\theta)}, \text{ for }(\ubar x, \bar x)\in R^{t_\nu}_\delta.
\Ee

To see \eqref{lowermaxbound}, we need the lower bound
\Be\label{convolutionlowerbound}
|f*\mu_t(\ubar x, \bar x)| \gc \delta^{n(1-\theta)} \text{ for $(\ubar x, \bar x)\in R^t_\delta$.}
\Ee To this end
observe that, given $(\ux,\ox)\in R^t_{\delta}$ and for $\om\in S^{2n-1}$ such that
\[|\om_l'|\lesssim \varsigma,|\om_r'|\lesssim \varsigma,|(\om_n,\om_{2n})-\tfrac{(x_n,x_{2n})}{|(x_n,x_{2n})|}|\lesssim \delta^{1-\theta},\] we have
\[|x_l'+t\om_l'| \lesssim \varsigma, \,\,\,\, |x_r'+t\om_r'| \lesssim \varsigma \]
and
\begin{align*}
|\ox+t\ux^\intercal J\om|
&\lesssim |\ox|+\tfrac{t}{2}|x_l'\om_r'-x_r'\om_l'|+\tfrac{t}{2}|x_n\om_{2n}-x_{2n}\om_n|\\
&\lesssim \delta^{1+\theta}+\delta+\Big|x_n\Big(\om_{2n}-\tfrac{x_{2n}}{|(x_n,x_{2n})|}\Big)-x_{2n}\Big(\om_n-\tfrac{x_{n}}{|(x_n,x_{2n})|}\Big)\Big|\\
&\lesssim \delta^{1-\theta} +|x_n|\delta^{1-\theta}+ |x_{2n}|\delta^{1-\theta}\lesssim \delta^{1-\theta}.
\end{align*}
Also for $i=n,2n$, we compute
\begin{align*}
|x_i+t\om _i|^2&=|x_i|^2+t^2|\om_i|^2+2tx_i\om_i\\
&\leq |x_n|^2+|x_{2n}|^2+2t|(x_n,x_{2n})|+2t(x_i\om_i-|(x_n,x_{2n})|)\\
&\leq (|(x_n,x_{2n})|+t)^2+2t|(x_n,x_{2n})|\big(\tfrac{x_i}{|(x_n,x_{2n})|}\om_i-1\big)\\
&\leq (|(x_n,x_{2n})|+t)^2+2t|(x_n,x_{2n})|(|\om_i|-1)\\
&\leq (|(x_n,x_{2n})|+t)^2+2t|(x_n,x_{2n})|(|(\om_n,\om_{2n})|-1)\\
&=(|(x_n,x_{2n})|+t)^2+2t|(x_n,x_{2n})|(\sqrt{1-|\om_l'|^2-|\om_r'|^2}-1).
\end{align*}
As $||(x_n,x_{2n})|+t-a|\lesssim \delta$, we obtain
\begin{align*}
|(|(x_n,x_{2n})|+t)^2-a^2|&\lesssim \delta,\\
|2t(x_n,x_{2n})|\sqrt{1-|\om_l'|^2-|\om_r'|^2}-1|&\lesssim (|t-a|+\delta)(|\om_l'|^2+|\om_r'|^2)\\
&\lesssim (|I|+\delta)\varsigma^2\lesssim \delta,
\end{align*}
where we use $|I|=\delta^{\theta}=\delta\varsigma^{-2}$. This implies
\[||x_i+t\om _i|^2-a^2|\lesssim \delta\]
and hence $||x_i-t\om_i|-a|\lesssim \delta$.
Thus, for $(\ux,\ox)\in R^t_{\delta}$, we have
\[f*\mu_t(\ux,\ox)=\int_{S^{2n-1}} f(\ux+t\om,\ox+t\ux^\intercal J\om)\,d\mu(\om)\gtrsim \varsigma^{(2n-2)}\delta^{(1-\theta)}=\delta^{n(1-\theta)}\] and \eqref{convolutionlowerbound} is proved. Hence \eqref{lowermaxbound} follows.

The lower bound \eqref{lowermaxbound} implies
\begin{align*} \|M_E f\|_q &\ge \Big(\sum_{\nu=1}^N
\delta^{n(1-\theta) q}
\meas (R^{t_\nu} _\delta) \Big)^{1/q} \\ &\gc \delta^{n(1-\theta)} N(E\cap I,\delta)^{1/q} ((\delta\varsigma^{-1})^{
2n-2}\delta^{2+\theta}) ^{1/q}\\
&\gc \delta^{n(1-\theta)} \delta^{\frac 1q((1+\theta)n+(1-\theta)(\eps-\gamma)+1)}.
\end{align*}
Thus we obtain the necessary condition for $L^p\to L^q$ boundedness
\[\delta^{n(1-\theta)} \delta^{\frac 1q((1+\theta)n+(1-\theta)(\eps-\gamma)+1)} \lc \delta^{\frac 1p (n(1-\theta)+2))}.
\]
for all $(\delta,I)\equiv (\delta_j,I_j)\in \cP_\eps$.
Taking the limit as $j\to \infty$ and using that $\eps>0$ can be chosen arbitrarily small we obtain the necessary condition
\[n(1-\theta)+\tfrac{(1+\theta)n-\gamma (1-\theta)+1} {q} \ge \tfrac{n(1-\theta)+2}{p} \] which using $\theta=1-\beta/\gamma$ is rewritten as
\Be\label{necQ3Q4}\tfrac{n\beta}{\gamma}+ \tfrac 1q\big((2-\tfrac {\beta}{ \gamma})n+1-\beta\big)\le \tfrac{1}{p} \big(\tfrac{n\beta}{\gamma}+2\big).\Ee
In the preceding inequality we get equality for the points
$Q_{3,\beta}$, $Q_{4,\gamma}$ in \eqref{quadrilateral}
and thus \eqref{necQ3Q4} expresses that $(1/p,1/q)$ has to lie on or above the line passing to $Q_{3,\beta} $ and $Q_{4,\gamma}$.

\def\MR#1{}
\providecommand{\bysame}{\leavevmode\hbox to3em{\hrulefill}\thinspace}
\providecommand{\MR}{\relax\ifhmode\unskip\space\fi MR }
\providecommand{\MRhref}[2]{
\href{http://www.ams.org/mathscinet-getitem?mr=#1}{#2}
}
\providecommand{\href}[2]{#2}

\end{document}